\newtheorem{lemma}{Lemma}[section]
\newtheorem{theorem}{Theorem}[section]
\newtheorem{corollary}{Corollary}[section]
\newcommand{\tnorm}[1]{{|\!|\!| #1 |\!|\!|}}
\newcommand{\jump}[1]{[\![#1]\!]}
\begin{document}
\begin{table}[tbp]
\begin{center}
\begin{tabular}{c}
  \textsc{A compatible embedded-hybridized discontinuous Galerkin method
  }\\\textsc{for the Stokes--Darcy-transport problem}\\
  \\
  \textsc{Aycil Cesmelioglu\footnotemark\footnotetext{Department of Mathematics and Statistics, Oakland University, Rochester, Michigan, USA. Email: \url{cesmelio@oakland.edu}. ORCID: 0000-0001-8057-6349} and Sander Rhebergen\footnote{Department of Applied Mathematics, University of Waterloo, Waterloo, Ontario, Canada. Email: \url{srheberg@uwaterloo.ca}. ORCID: 0000-0001-6036-0356.}}
\end{tabular}
\end{center}
\end{table}
\subsubsection*{Abstract}
We present a stability and error analysis of an embedded-hybridized
discontinuous Galerkin (EDG-HDG) finite element method for coupled
Stokes--Darcy flow and transport. The flow problem, governed by the
Stokes--Darcy equations, is discretized by a recently introduced
exactly mass conserving EDG-HDG method while an embedded discontinuous
Galerkin (EDG) method is used to discretize the transport equation. We
show that the coupled flow and transport discretization is compatible
and stable. Furthermore, we show existence and uniqueness of the
semi-discrete transport problem and develop optimal a priori error
estimates. We provide numerical examples illustrating the theoretical
results. In particular, we compare the compatible EDG-HDG
discretization to a discretization of the coupled Stokes--Darcy and
transport problem that is not compatible. We demonstrate that where
the incompatible discretization may result in spurious oscillations in
the solution to the transport problem, the compatible discretization
is free of oscillations. An additional numerical example with
realistic parameters is also presented.
\\
\\
\textbf{Keywords:} Stokes--Darcy flow, Coupled flow and transport,
Beavers--Joseph--Saffman, Embedded and Hybridized methods,
Discontinuous Galerkin, Multiphysics.
%
\section{Introduction}
\label{sec:introduction}

The coupled Stokes--Darcy equations describe the interaction between
free flow and flow in porous media. To model the transport of
chemicals and contaminants in, for example, surface/subsurface flows,
biochemical transport, or vascular hemodynamics problems, the
Stokes--Darcy equations are coupled to a transport equation
\cite{Dangelo:2011}. In this paper we consider one way coupling in
which the velocity solution to the Stokes--Darcy flow problem is used
in the transport equation to advect and diffuse a contaminant.

Many different finite element and mixed finite element
\cite{Discacciati:2002, Layton:2002, Burman:2005, Cao:2010,
  Gatica:2009, Camano:2015, Marquez:2015}, discontinuous Galerkin (DG)
\cite{Cesmelioglu:2009, Girault:2009, Girault:2014, Lipnikov:2014,
  Riviere:2005, RiviereYotov:2005}, and hybridizable discontinuous
Galerkin (HDG) \cite{Egger:2013, Fu:2018, Gatica:2017, Igreja:2018}
methods have been proposed to discretize the Stokes--Darcy
problem. Likewise, many different finite element methods have been
proposed to discretize the transport equation, such as streamline
diffusion, DG, and HDG methods \cite{Brooks:1982, Cockburn:1998,
  Houston:2002, Hughes:2005, Nguyen:2009, Wells:2011}. However,
stability and accuracy of a discretization for the Stokes--Darcy
problem and separately a discretization for the transport problem,
does not guarantee that the coupled discretization for the
Stokes--Darcy-transport problem will be stable and accurate. Examples
of discontinuous Galerkin methods for the Stokes--Darcy-transport
problem have been proposed in \cite{Riviere:2014, Vassilev:2009}

To address the issue of coupling the discretization of a flow problem
to the discretization of a transport problem, \citet{Dawson:2004}
introduced the concept of \emph{compatibility}; a discretization for
flow and transport is compatible if it is globally conservative and
zeroth-order accurate. They showed that loss of accuracy and/or loss
of global conservation may occur if the discretization is not
compatible. They furthermore showed that, for discontinuous Galerkin
methods, compatibility is a stronger statement than local conservation
of the flow field.

In this paper we discretize the Stokes--Darcy problem by an
embedded-hybridized discontinuous Galerkin (EDG-HDG) finite element
method \cite{Cesmelioglu:2020}. The two main reasons to consider this
discretization are: (i) mass is conserved exactly, i.e., the velocity
field is divergence-conforming on the whole domain and mass is
conserved point-wise; and (ii) the EDG-HDG discretization has fewer
globally coupled degrees of freedom than traditional DG and HDG
methods and is generally better suited to fast iterative solvers than
HDG methods, as shown in \cite{Rhebergen:2020}. We discretize the
transport equation by an embedded discontinuous Galerkin (EDG) method
\cite{Wells:2011}, the main motivation being that EDG discretizations
are generally computationally more efficient than DG and HDG
discretizations. We will show that the EDG-HDG flow discretization is
compatible with the EDG discretization of the transport equation. We
prove well-posedness of the discrete transport problem and present
optimal error estimates.

The outline for the remainder of this paper is as follows. In
\cref{sec:stokesdarcy} we describe the Stokes--Darcy flow system and
its coupling to the transport problem. The compatible EDG-HDG
discretization for the Stokes--Darcy-transport problem, together with
some of its properties, is introduced in
\cref{sec:numericalmethod}. In \cref{sec:usefulineq} we discuss useful
inequalities that will be used to prove existence and uniqueness of a
solution to the semi-discrete transport problem in
\cref{sec:coerstabexisconsis}. Error estimates are developed for the
semi-discrete transport scheme in \cref{sec:erroranalsemidiscrete}. In
\cref{sec:numexp} we verify the analysis by numerical experiments
while conclusions are drawn in \cref{sec:conclusions}.

\section{The Stokes--Darcy system coupled to transport}
\label{sec:stokesdarcy}
Let $\Omega\subset \mathbb{R}^{\dim}$, ${\dim} = 2, 3$, be a bounded
polygonal domain with boundary $\partial \Omega$ and boundary outward
unit normal vector $n$. We assume the domain is divided into two
non-overlapping polygonal subdomains, $\Omega^s$ and $\Omega^d$, such
that $\Omega=\Omega^s\cup\Omega^d$. The interface separating these
subdomains is denoted by $\Gamma^{I}$. Furthermore, for $j=s,d$, the
outward unit normal vector of $\Omega^{j}$ is denoted by $n^{j}$ and
the exterior boundary of $\Omega^{j}$ is denoted by
$\Gamma^{j}=\partial \Omega\cap \Omega^{j}$. The Stokes--Darcy system
for the velocity $u: \Omega\rightarrow \mathbb{R}^{\dim}$ and pressure
$p : \Omega \rightarrow \mathbb{R}$ is given by
\begin{subequations}
  \begin{align}
    \label{eq:momentum}
    -\nabla\cdot 2\mu\varepsilon(u) + \nabla p &= f^s & & \text{in}\ \Omega^s,
    \\
    \label{eq:d_velocity}
    \kappa^{-1}u + \nabla p &= 0 & & \text{in}\ \Omega^d,
    \\
    \label{eq:mass}
    -\nabla\cdot u &= \chi^d f^d & & \text{in}\ \Omega,
    \\   
    \label{eq:bc_s}
    u &= 0 & & \text{on}\ \Gamma^s,
    \\
    \label{eq:bc_d}
    u\cdot n &= 0 & & \text{on}\ \Gamma^d,
  \end{align}
  \label{eq:system}
\end{subequations}
where $\varepsilon(u) = (\nabla u + \nabla u^T)/2$ is the strain rate
tensor, $\mu>0$ is the constant kinematic viscosity, $\kappa>0$ is the
permeability constant, $f^s: \Omega^s\rightarrow \mathbb{R}^{\dim}$ is
a forcing term, $f^d: \Omega^d\rightarrow \mathbb{R}$ is a source/sink
term, and $\chi^d$ is the characteristic function of $\Omega^d$. In
the following we will denote the restriction of $u$ and $p$ on
$\Omega^{j}$ by $u^{j}$ and $p^{j}$, respectively, for $j=s,d$.

Let $n$ denote the unit normal vector on $\Gamma^I$ pointing outwards
from $\Omega^s$, that is, $n = n^s = -n^d$. We denote the tangential
component of a vector $w$ by $(w)^t := w - (w\cdot n) n$. On the
interface we then prescribe the following transmission conditions:
\begin{subequations}
  \begin{align}
    \label{eq:bc_I_u}
    u^s\cdot n & = u^d\cdot n & & \text{on}\ \Gamma^I,
    \\
    \label{eq:bc_I_p}
    p^s - 2\mu\varepsilon(u^s)n \cdot n & = p^d & & \text{on}\ \Gamma^I,
    \\
    \label{eq:bc_I_slip}
    -2\mu\del{\varepsilon(u^s)n}^t & = \alpha \kappa^{-1/2}(u^s)^t
                              & & \text{on}\ \Gamma^I, 
  \end{align}
  \label{eq:interface}
\end{subequations}
where $\alpha > 0$ is an experimentally determined constant. 
\Cref{eq:bc_I_u,eq:bc_I_p} denote balance of flux and normal stress,
and \cref{eq:bc_I_slip} is the Beavers--Joseph--Saffman interface
condition \cite{Beavers:1967, Saffman:1971}.

The Stokes--Darcy system described above is coupled to a transport
equation over the time interval of interest $I=(0,T]$. Given a
porosity constant $\phi$ such that $0< \phi \leq 1$ in $\Omega^d$ and
$\phi=1$ in $\Omega^s$, velocity field
$u : \Omega\to\mathbb{R}^{\dim}$, and source/sink term
$f : \Omega\times I \rightarrow \mathbb{R}$, the transport equation
for the concentration $c : \Omega \times I \rightarrow \mathbb{R}$ of
a contaminant is given by:
\begin{subequations}
  \begin{align}
    \label{eq:transport_eq}
    \phi\partial_t c + \nabla \cdot (c u -D(u) \nabla c ) & = f && 
    \text{in}\ \Omega \times I,
    \\
    \label{eq:transport_bc}
    D(u) \nabla c \cdot n & = 0 &&\text{on}\ \partial \Omega \times I,
    \\
    \label{eq:transport_ic}
    c(x,0) & = c_0(x) && \text{in}\ \Omega, 
  \end{align}
  \label{eq:transport_system}
\end{subequations}
where $c_0 : \Omega \rightarrow \mathbb{R}$ is a suitably smooth
initial condition, and $D(u)$ is the diffusion/dispersion tensor. Let
$|\cdot|$ denote the Euclidean norm. We will assume that $D(u)$
satisfies the following conditions for $u, v \in \mathbb{R}^{\dim}$:
\begin{subequations}
  \begin{align}
  \label{eq:D_min}
    D_{\min} |x|^2 &\le D(u) x \cdot x \quad \forall x \in \mathbb{R}^{\dim},
    \\
    \label{eq:upperboundDuabs}
    |D(u)| &\le C(1 + \envert{u}),
    \\
    \label{eq:DLipschitz}
    \envert{ D(u) - D(v) } &\le C \envert{u - v},    
  \end{align}
\end{subequations}
where $D_{\min} > 0$ and $C > 0$ a constant. For the remainder of this
paper, $C > 0$ will always denote a generic constant.

\section{The numerical method}
\label{sec:numericalmethod}
Before introducing the numerical method we consider two properties of
the coupled Stokes--Darcy flow and transport system. First,
integrating the transport equation \cref{eq:transport_eq} over
$\Omega \times (0, t)$, applying the boundary conditions
\cref{eq:bc_s,eq:bc_d,eq:transport_bc}, and the initial condition
\cref{eq:transport_ic}, results in the following expression of global
conservation:
\begin{equation*}
  \label{eq:globalconservation}
  \int_{\Omega} \phi c(x, t) \dif x 
  = \int_{\Omega} \phi c_0(x) \dif x + \int_0^t\int_{\Omega} f \dif x \dif s.
\end{equation*}
Second, if the initial condition in \cref{eq:transport_ic} is set as
$c_0(x) = \tilde{c}$, with $\tilde{c}$ a constant, and the source/sink
term $f$ in \cref{eq:transport_eq} takes the form
$f = -\chi^df^d\tilde{c}$, then $c(x,t) = \tilde{c}$ for $t > 0$.

A discretization of Stokes--Darcy flow \cref{eq:system,eq:interface}
coupled to transport \cref{eq:transport_system} is called
\emph{compatible} if it satisfies these two properties at the discrete
level \cite{Dawson:2004}. Compatibility was shown in
\cite{Dawson:2004} to be a desirable property of a discretization that
couples a flow model to transport for reasons of accuracy and/or
stability. They furthermore showed that for discontinuous Galerkin
discretizations, compatibility is a \emph{stronger statement} than
local conservation of the flow field.

In this section we present a compatible embedded-hybridized
discontinuous Galerkin method for the Stokes--Darcy flow coupled to
transport \cref{eq:system,eq:interface,eq:transport_system}.

\subsection{Notation}
For $j=s,d$, let $\mathcal{T}^j := \cbr{K}$ be a shape-regular
triangulation of $\Omega^j$ consisting of non-overlapping elements $K$
and such that meshes match at the interface $\Gamma^I$. Furthermore,
let $\mathcal{T} = \mathcal{T}^s\cup\mathcal{T}^d$. We denote the
diameter of an element $K$ by $h_K$ and set
$h := \max_{K\in\mathcal{T}}h_K$. The boundary of an element is
denoted by $\partial K$ and the element boundary outward unit normal
vector is denoted by $n$.

An interior facet is a facet that is shared by two adjacent
elements. A boundary facet is a facet of $\partial K$ that lies on
$\partial\Omega$. The set and union of all facets are denoted by,
respectively, $\mathcal{F} = \cbr{F}$ and $\Gamma_0$. Furthermore,
the set of all facets that lie on the interface $\Gamma^I$ is denoted
by $\mathcal{F}^I$, and by $\mathcal{F}^j$ and $\Gamma_0^j$ ($j=s,d$)
we denote, respectively, the set and union of all facets in
$\overline{\Omega}^j$.

We use $(v,w)_D$ to denote the $L^2$-inner product of two functions
$v$ and $w$ defined on $D \subset \mathbb{R}^{\dim}$. By
$\langle v, w\rangle_D$ we denote the $L^2$-inner product of two
functions $v$ and $w$ defined on $D \subset \mathbb{R}^{\rm
  dim-1}$. For $D \subset \mathbb{R}^{\dim}$ or
$D \subset \mathbb{R}^{\dim - 1}$ we denote the standard norm on
the Sobolev space $W^{s,p}(D)$ by $\norm{\cdot}_{s,p,D}$ and the
semi-norm on $W^{s,p}(D)$ by $|\cdot|_{s,p,D}$. When $p=2$, we drop the
subscript $p$ and write for the norm and semi-norm $\norm{\cdot}_{s,D}$
and $|\cdot|_{s,D}$, respectively. For the $L^2$-norm we drop the
subscript $s=0$ and write $\norm{\cdot}_{D}$.

\subsection{The embedded-hybridized DG method for the Stokes--Darcy
  equations}
\label{ss:hdgflow}
We discretize the Stokes--Darcy flow problem
\cref{eq:system,eq:interface} by an exactly mass conserving
embedded-hybridized discontinuous Galerkin (EDG-HDG) method
\cite{Cesmelioglu:2020}. For this we consider the following
discontinuous Galerkin finite element function spaces on $\Omega$,
\begin{equation*}
  \label{eq:DGcellspaces}
  \begin{split}
    V_h &:= \cbr[1]{v_h \in \sbr[0]{L^2(\Omega)}^{\dim} : \ v_h \in
      \sbr[0]{P_k(K)}^{\dim} \ \forall\ K \in \mathcal{T}},
    \\
    Q_h &:= \cbr[1]{q_h \in L^2(\Omega) : \ q_h \in P_{k-1}(K) \
      \forall \ K \in \mathcal{T}}\cap L^2_0(\Omega),
    \\
    Q_h^j &:= \cbr[1]{q_h \in L^2(\Omega^j) : \ q_h \in P_{k-1}(K) \
      \forall \ K \in \mathcal{T}^j},\quad j=s,d,
  \end{split}
\end{equation*}
where $P_k(D)$ denotes the space of polynomials of degree $k$ on
domain~$D$ and $L^2_0(\Omega) := \{q \in L^2(\Omega) : \int_{\Omega} q
\dif x = 0\}$. On $\Gamma_0^s$ and $\Gamma_0^d$, we consider the finite
element spaces:
\begin{equation*}
  \label{eq:DGfacetspaces}
  \begin{split}
    \bar{V}_h &:= \cbr[1]{\bar{v}_h \in \sbr[0]{L^2(\Gamma_0^s)}^{\dim}:\
      \bar{v}_h \in \sbr[0]{P_{k}(F)}^{\dim}\ \forall\ F \in \mathcal{F}^s,\
      \bar{v}_h = 0 \ \mbox{on}\ \Gamma^s} \cap \sbr[0]{C^0(\Gamma_0^s)}^{\dim},
    \\
    \bar{Q}_h^j &:= \cbr[1]{\bar{q}_h^j \in L^2(\Gamma_0^j) : \ \bar{q}_h^j
      \in P_{k}(F) \ \forall\ F \in \mathcal{F}^j},\quad j=s,d.
  \end{split}
\end{equation*}
Note that functions in $\bar{V}_h$ are continuous on $\Gamma_0^s$,
while functions in $\bar{Q}_h^j$ are discontinuous on $\Gamma_0^j$,
for $j=s,d$.

Following the notation of \cite{Cesmelioglu:2020}, we introduce the
spaces $\boldsymbol{V}_h := V_h \times \bar{V}_h$,
$\boldsymbol{Q}_h := Q_h \times \bar{Q}_h^s \times \bar{Q}_h^d$ and
$\boldsymbol{Q}_h^{j} := Q_h^j \times \bar{Q}_h^j$ for $j=s,d$. We
denote function pairs in $\boldsymbol{V}_h$, $\boldsymbol{Q}_h$ and
$\boldsymbol{Q}_h^{j}$, for $j=s,d$, by
$\boldsymbol{v}_h := (v_h, \bar{v}_h) \in \boldsymbol{V}_h$,
$\boldsymbol{q}_h := (q_h, \bar{q}_h^s, \bar{q}_h^d) \in
\boldsymbol{Q}_h$ and
$\boldsymbol{q}_h^{j} := (q_h, \bar{q}_h^j) \in
\boldsymbol{Q}_h^{j}$. Finally, we set
$\boldsymbol{X}_h := \boldsymbol{V}_h \times \boldsymbol{Q}_h$.

The EDG-HDG method for the Stokes--Darcy flow problem now reads: find
$(\boldsymbol{u}_h, \boldsymbol{p}_h) \in \boldsymbol{X}_h$ such that
\begin{equation}
  \label{eq:hdgwf}
  F_h( (\boldsymbol{u}_h, \boldsymbol{p}_h), (\boldsymbol{v}_h, \boldsymbol{q}_h) )  
  = (f^s, v_h)_{\Omega^s} + (f^d, q_h)_{\Omega^d} 
  \quad \forall (\boldsymbol{v}_h, \boldsymbol{q}_h) \in \boldsymbol{X}_h,
\end{equation}
where
\begin{equation*}
  F_h( (\boldsymbol{u}, \boldsymbol{p}), (\boldsymbol{v}, \boldsymbol{q}) ) =
  a_h(\boldsymbol{u}, \boldsymbol{v}) 
  + b_h (\boldsymbol{p}, \boldsymbol{v}) + b_h(\boldsymbol{q}, \boldsymbol{u}).
\end{equation*}
Here the bi-linear form $a_h(\cdot, \cdot)$ is defined as
\begin{equation*}
  \begin{split}
    a_h(\boldsymbol{u}, \boldsymbol{v}) =&
    \sum_{K\in \mathcal{T}^s}(2\mu \varepsilon(u),\varepsilon(v))_{K}
    +\sum_{K\in \mathcal{T}^s}\langle \tfrac{2\beta_f\mu}{h_K}(u-\bar{u}), v-\bar{v}\rangle_{\partial K}
    \\
    & -\sum_{K\in \mathcal{T}^s}\langle 2\mu\varepsilon(u)n^s,v-\bar{v}\rangle_{\partial K}
    - \sum_{K\in \mathcal{T}^s}\langle2\mu\varepsilon(v)n^s, u-\bar{u}\rangle_{\partial K}
    \\
    & + (\kappa^{-1} u, v)_{\Omega^d} 
    + \langle\alpha\kappa^{-1/2} \bar{u}^t, \bar{v}^t\rangle_{\Gamma^I},    
  \end{split}
\end{equation*}
where $\beta_f > 0$ is a penalty parameter. The bi-linear form
$b_h(\cdot, \cdot)$ is defined as
\begin{equation*}
  b_h(\boldsymbol{p}, \boldsymbol{v} )
  =
  -\sum_{K\in \mathcal{T}}(p, \nabla \cdot v)_{K} 
  + \sum_{j=s,d} \sum_{K\in \mathcal{T}^j}\langle \bar{p}^j, v \cdot n^j\rangle_{\partial K}
  - \langle\bar{p}^s-\bar{p}^d,\bar{v}\cdot n\rangle_{\Gamma^I}.
\end{equation*} 

The following results are from \cite{Cesmelioglu:2020} and will be
used in the analysis. For sufficiently large $\beta_f$, there exists a
unique solution
$(\boldsymbol{u}_h, \boldsymbol{p}_h) \in \boldsymbol{X}_h$ to
\cref{eq:hdgwf} (see \cite[Proposition 1]{Cesmelioglu:2020}). An a
priori error analysis showed that if the velocity solution $u$ to
\cref{eq:system,eq:interface} satisfies $u^s \in H^{k+1}(\Omega^s)$
and $u^d\in H^{k+1}(\Omega^d)$ with $k\geq 1$, then \cite[Theorem
3]{Cesmelioglu:2020}
\begin{equation}
  \label{eq:L2error-u}
  \norm{u-u_h}_{\Omega} \le Ch^{k+1},
\end{equation}
where $C$ is a generic constant that depends on the regularity of
$u^s$ and $u^d$. Furthermore, the EDG-HDG method for the Stokes--Darcy
system is exactly mass conserving, divergence-conforming, and
satisfies \cref{eq:bc_I_u}, i.e.,
\begin{subequations}
  \begin{align}
    \label{eq:uh_properties_a}
    -\nabla \cdot u_h & = \chi^d\Pi_Q f^d && \forall x \in K,\ \forall K \in \mathcal{T},
    \\
      \label{eq:uh_properties_b}
    \jump{u_h\cdot n} & = 0 && \forall x \in F,\ \forall F \in \mathcal{F},
    \\
    \label{eq:uh_properties_c}
    u_h \cdot n & = \bar{u}_h \cdot n && \forall x \in F,\ \forall F \in \mathcal{F}^I,    
  \end{align}
  \label{eq:uh_properties}
\end{subequations}
where $\Pi_Q$ is the standard $L^2$-projection into $Q_h$ and
$\jump{\cdot}$ is the usual jump operator.

\subsection{The embedded DG method for the transport equation}
\label{sec:stokesdarcytransport}
Before introducing the embedded discontinuous Galerkin (EDG) method
for the transport equation, we first replace the exact velocity $u$ in
\cref{eq:transport_system} by the discrete velocity $u_h$:
\begin{subequations}
  \begin{align}
    \label{eq:transport_eq_uh}
    \phi\partial_t c + \nabla \cdot (c u_h -D(u_h) \nabla c ) & = f && 
    \text{in}\ \Omega \times I,
    \\
    \label{eq:transport_bc_uh}
    D(u_h) \nabla c \cdot n & = 0 &&\text{on}\ \partial \Omega\times I,
    \\
    \label{eq:transport_ic_uh}
    c(x,0) & = c_0(x) && \text{in}\ \Omega.
  \end{align}
  \label{eq:transport_system_uh}
\end{subequations}
We will introduce the EDG method for
\cref{eq:transport_system_uh}. For this we require the following
discrete spaces:
\begin{equation}
  \label{eq:spaces_transport}
  \begin{split}
    C_h &= \cbr[0]{c_h\in L^2(\Omega) : \ c_h \in P_{\ell}(K) ,\
      \forall \ K \in \mathcal{T}},
    \\
    \bar{C}_h &= \cbr[0]{\bar{c}_h \in L^2(\Gamma_0) : \ \bar{c}_h \in
      P_{\ell}(F) \ \forall\ F \in \mathcal{F}} \cap C^0(\Gamma_0),
  \end{split}
\end{equation}
where the choice of $\ell$ will be discussed in
\cref{ss:compatibility}. For notational purposes, we introduce
$\boldsymbol{C}_h = C_h \times \bar{C}_h$ and
$\boldsymbol{c}_h = (c_h,\bar{c}_h) \in \boldsymbol{C}_h$.

The semi-discrete EDG method for the transport problem
\cref{eq:transport_system_uh} is now given by: For each $t > 0$, find
$\boldsymbol{c}_h(t)\in \boldsymbol{C}_h$ such that
\begin{equation}
  \label{eq:semidiscrete}
  \sum_{K\in \mathcal{T}}(\phi\, \partial_tc_h(t), w_h)_{K} 
  + B_h(u_h; \boldsymbol{c}_h(t), \boldsymbol{w}_h)
  = \sum_{K\in \mathcal{T}} (f(t) , w_h)_K \quad
  \forall \boldsymbol{w}_h\in \boldsymbol{C}_h,
\end{equation}
where
\begin{equation}
  \label{eq:bilinearform_Bh}
  B_h(u; \boldsymbol{c}(t), \boldsymbol{w}) =
  B_h^a(u; \boldsymbol{c}(t), \boldsymbol{w})
  + B_h^d(u; \boldsymbol{c}(t), \boldsymbol{w}).
\end{equation}
Here $B_h^a(u; \boldsymbol{c}(t), \boldsymbol{w})$ and
$B_h^d(u; \boldsymbol{c}(t), \boldsymbol{w})$ represent, respectively
the advective and diffusive parts of the bi-linear form. They are
defined as:
\begin{equation}
  \label{eq:bilinearform_Bha}
  B_h^a(u; \boldsymbol{c}(t), \boldsymbol{w})
  = - \sum_{K\in \mathcal{T}} (c(t) \, u , \nabla w)_{K}
  + \sum_{K\in \mathcal{T}}\langle c(t)\, u \cdot n , w-\bar{w}\rangle_{\partial K}
  - \sum_{K\in \mathcal{T}}\langle u\cdot n\, (c(t)-\bar{c}(t)), w-\bar{w}\rangle_{\partial K^{\rm in}},  
\end{equation}
where $\partial K^{\rm in}$ denotes the portion of the boundary where
$u_h \cdot n < 0$, and
\begin{equation*}
  \label{eq:bilinearform_Bhd}
  \begin{split}
    B_h^d(u; \boldsymbol{c}(t), \boldsymbol{w}) = &
    \sum_{K\in \mathcal{T}}(D(u)\nabla c(t) , \nabla w)_{K}
    - \sum_{K\in \mathcal{T}}\langle[D(u)\nabla c(t)] \cdot n, w-\bar{w}\rangle_{\partial K}
    \\
    & + \sum_{K\in \mathcal{T}} \tfrac{\beta_c }{h_K} \langle[D(u) n](c(t)-\bar{c}(t)),(w-\bar{w}) n\rangle_{\partial K}
    \\
    & - \sum_{K\in \mathcal{T}} \langle [D(u)\nabla w] \cdot n , c(t)-\bar{c}(t)\rangle_{\partial K},    
  \end{split}
\end{equation*}
where $\beta_c > 0$ is a penalty parameter.

To complete the discretization, we impose the initial condition
\cref{eq:transport_ic} by an $L^2$-projection of $c_0$ into $C_h$.

\subsection{Compatibility}
\label{ss:compatibility}
In this section we show that \cref{eq:hdgwf} and
\cref{eq:semidiscrete} describe a compatible discretization of the coupled
Stokes--Darcy flow and transport problem provided $\ell$ in
\cref{eq:spaces_transport} is suitably chosen. Since global
conservation of the EDG method for the transport equation was shown in
\cite{Wells:2011}, we only show that \cref{eq:semidiscrete} is able to
preserve the constant solution when $f = -\chi^df^d\tilde{c}$.

The constant $\boldsymbol{c}_h = (\tilde{c}, \tilde{c})$ is preserved
by \cref{eq:semidiscrete} if and only if
\begin{equation}
  \label{eq:semidiscreteconserveconstant}
  B_h(u_h; (\tilde{c}, \tilde{c}), \boldsymbol{w}_h)
  =
  -\sum_{K\in \mathcal{T}^d} (f^d\tilde{c} , w_h)_K
  \quad
  \forall \boldsymbol{w}_h\in \boldsymbol{C}_h.
\end{equation}
We observe that \cref{eq:semidiscreteconserveconstant} is equivalent
to
\begin{equation}
  \label{eq:claimsr}
  -\sum_{K\in\mathcal{T}} (u_h , \nabla w_h)_K
  +\sum_{K\in\mathcal{T}} \langle u_h\cdot n, w_h-\bar{w}_h\rangle_{\partial K}
  =
  -\sum_{K\in\mathcal{T}^d} (f^d, w_h)_K
  \quad
  \forall \boldsymbol{w}_h\in \boldsymbol{C}_h.  
\end{equation}
Using that
$\nabla\cdot (u_hw_h) = u_h\cdot \nabla w_h + w_h\nabla\cdot u_h$ on
each element $K$, integration by parts, single-valuedness of
$\bar{w}_h$ and $u_h\cdot n$ on interior facets (by
\cref{eq:uh_properties_b}), and that $u_h\cdot n = 0$ on the boundary
of the domain, \cref{eq:claimsr} simplifies to
\begin{equation*}
  \sum_{K\in\mathcal{T}} (\nabla \cdot u_h, w_h)_K
  =
  -\sum_{K\in\mathcal{T}^d} (f^d, w_h)_K
  \quad
  \forall w_h \in C_h,
\end{equation*}
and so, using \cref{eq:uh_properties_a}, the constant
$\boldsymbol{c}_h = (\tilde{c}, \tilde{c})$ is preserved by
\cref{eq:semidiscrete} if and only if
\begin{equation*}
  \label{eq:fdQfd}
  \sum_{K\in\mathcal{T}^d} (\Pi_Qf^d, w_h)_K
  =
  \sum_{K\in\mathcal{T}^d} (f^d, w_h)_K
  \quad
  \forall w_h \in C_h.      
\end{equation*}
This statement implies that if $f^d \ne 0$ we must choose $\ell = k-1$
in \cref{eq:spaces_transport} for the discretization defined by \cref{eq:hdgwf} and
\cref{eq:semidiscrete} to be a compatible discretization of the
coupled Stokes--Darcy flow and transport problem. If $f^d = 0$ then
$\ell$ can be chosen independent of $k$.

\section{Useful inequalities}
\label{sec:usefulineq}
In subsequent sections, extensive use will be made of the following
continuous trace inequalities \cite[Theorem 1.6.6]{Brenner:book}:
\begin{align}
  \label{eq:2-trace-continuous}
  \norm{v}_{\partial K}^2 & \le C \del[1]{h_K^{-1} \norm{v}_K^2 + h_K \envert{v}_{1,K}^2} && \forall v\in H^1(K),
  \\
  \label{eq:inf-trace-continuous}
  \norm{v}_{0,\infty,\partial K} & \leq C\norm{v}_{0,\infty,K} &&\forall v\in W^{1,\infty}(K),
\end{align}
as well as the following discrete inverse and trace
inequalities \cite[Lemma 1.50, Lemma 1.52]{Pietro:book}
\begin{align}
  \label{eq:inverse}
  \norm{v_h}_{0,\infty, K} & \leq C h_K^{-{\dim}/2}\norm{v_h}_K && \forall v_h \in P_k(K),
  \\
  \label{eq:2-trace}
  \norm{v_h}_{\partial K} & \leq C h_K^{-1/2}\norm{v_h}_{K} && \forall v_h \in P_k(K).
\end{align}
The inequalities \eqref{eq:2-trace-continuous}--\eqref{eq:2-trace} hold
also for $\rm dim$-dimensional vector functions.

An immediate consequence of
\cref{eq:2-trace-continuous,eq:2-trace,eq:L2error-u} is the following
trace inequality that holds for the velocity solution $u$ to
\cref{eq:system,eq:interface} and velocity solution $u_h$ to
\cref{eq:hdgwf}: For $u^s \in \sbr[0]{H^{k+1}(\Omega^s)}^{\dim}$ and
$u^d \in \sbr[0]{H^{k+1}(\Omega^d)}^{\dim}$ with $k\geq 1$,
\begin{equation}
  \label{eq:bounduhuboundary}
  \norm{u-u_h}_{\partial K} \le Ch_K^{k+1/2}.
\end{equation}  
By $\Pi_V$ we denote the $L^2$-projection onto $V_h$ and recall that
for all $0\leq s\leq k+1$ and $u \in \sbr[0]{W^{s,p}(K)}^{\dim}$,
$k\ge 0$ \cite[Proposition 1.135]{Ciarlet:book}:
\begin{equation} 
  \label{eq:L2u} 
  \norm{u-\Pi_V u}_{0,p,K} \leq C h_K^{s}\norm{u}_{s,p,K},
 \quad 1\leq p \leq \infty.
\end{equation}
We also require a bound on $\norm{u_h}_{0,\infty,\Omega}$ which we
prove next.

\begin{lemma}
  Let $u$ denote the velocity solution to
  \cref{eq:system,eq:interface} and assume
  $u\in \sbr[0]{L^{\infty}(\Omega)}^{\dim}$ such that
  $u^s\in \sbr[0]{H^{k+1}(\Omega^s)}^{\dim}$ and
  $u^d\in \sbr[0]{H^{k+1}(\Omega^d)}^{\dim}$ with $k\geq 1$. Then the
  velocity solution $u_h$ to \cref{eq:hdgwf} satisfies
  \begin{equation}
    \label{eq:uhbound-Linf}
    \norm{u_h}_{0,\infty,\Omega}\leq C.
  \end{equation}
\end{lemma}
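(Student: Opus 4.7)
The plan is to insert the $L^2$-projection $\Pi_V u$ and apply a triangle inequality,
\begin{equation*}
\|u_h\|_{0,\infty,K} \le \|u_h - \Pi_V u\|_{0,\infty,K} + \|\Pi_V u\|_{0,\infty,K},
\end{equation*}
and to bound each term element by element. Since $u_h - \Pi_V u$ belongs to $[P_k(K)]^{\dim}$, the discrete inverse inequality \cref{eq:inverse} gives $\|u_h - \Pi_V u\|_{0,\infty,K} \le C h_K^{-\dim/2}\|u_h - \Pi_V u\|_K$. Inserting $\pm u$ and using the velocity error estimate \cref{eq:L2error-u} together with the $L^2$ approximation property \cref{eq:L2u} (with $p=2$, $s=k+1$) yields $\|u_h - \Pi_V u\|_K \le \|u_h - u\|_K + \|u - \Pi_V u\|_K \le C h^{k+1}$, and therefore
\begin{equation*}
\|u_h - \Pi_V u\|_{0,\infty,K} \le C h^{k+1-\dim/2}.
\end{equation*}
Since $k \ge 1$ and $\dim \in \{2,3\}$, the exponent $k+1-\dim/2$ is nonnegative, so (assuming $h$ bounded above) this term is uniformly bounded.

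For the projection term, I will exploit the $L^{\infty}$-stability of the elementwise $L^2$-projection on a discontinuous polynomial space. Applying the inverse inequality \cref{eq:inverse} to $\Pi_V u \in [P_k(K)]^{\dim}$, using that $\Pi_V$ is $L^2$-stable, and then bounding $\|u\|_K \le |K|^{1/2}\|u\|_{0,\infty,K} \le C h_K^{\dim/2}\|u\|_{0,\infty,K}$ produces
\begin{equation*}
\|\Pi_V u\|_{0,\infty,K} \le C h_K^{-\dim/2}\|\Pi_V u\|_K \le C h_K^{-\dim/2}\|u\|_K \le C \|u\|_{0,\infty,K},
\end{equation*}
which is uniformly bounded by the assumption $u \in [L^{\infty}(\Omega)]^{\dim}$.

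Combining the two bounds and taking the maximum over $K \in \mathcal{T}$ gives $\|u_h\|_{0,\infty,\Omega} \le C$, as claimed. There is no real obstacle here; the only subtlety is making sure the inverse-inequality blow-up $h^{-\dim/2}$ in the first term is compensated by the order $h^{k+1}$ from the error estimate, which is exactly why the hypothesis $k \ge 1$ is needed.
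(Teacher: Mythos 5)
Your proposal is correct and follows essentially the same route as the paper: insert $\Pi_V u$, apply the inverse inequality \cref{eq:inverse} to the polynomial $u_h-\Pi_V u$, and absorb the $h_K^{-\dim/2}$ factor using \cref{eq:L2error-u} and \cref{eq:L2u}, while the projection term is controlled by $\norm{u}_{0,\infty,K}$. The only cosmetic difference is that you derive the $L^\infty$-stability of $\Pi_V$ via the inverse inequality and H\"older, whereas the paper invokes it directly (via \cref{eq:L2u} with $p=\infty$, $s=0$); both are valid.
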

\begin{proof} 
  By \cref{eq:inverse}, \cref{eq:L2error-u}, and \cref{eq:L2u} we
  find that
  \begin{align*}
    \norm{u_h}_{0,\infty,K}
    &\leq \norm{u_h-\Pi_V u}_{0,\infty,K} + \norm{\Pi_V u}_{0,\infty,K}
    \\
    & \leq C h_K^{-{\dim}/2}\norm{u_h-\Pi_V u}_{K} + \norm{\Pi_V u}_{0,\infty,K}
    \\
    & \leq Ch_K^{-{\dim}/2}\del[1]{\norm{u_h-u}_{K} + \norm{u - \Pi_V u}_{K}} + \norm{u}_{0,\infty,K}
    \\
    & \leq Ch^{k+1-{\dim}/2}(\norm{u}_{k+1,\Omega^s} + \norm{u}_{k+1,\Omega^d}) + C\norm{u}_{0,\infty,K}.
  \end{align*}
  The result follows by taking the maximum over $K\in \mathcal{T}$.
\end{proof}

Finally, by \cref{eq:upperboundDuabs}, we note that for
$v \in \sbr[0]{L^{\infty}(\Omega)}^{\dim}$,
\begin{equation}
  \label{eq:D_max}
  \norm[0]{D(v)}_{0,\infty, \Omega} \leq D_{\max},
\end{equation}
where the constant $D_{\max}>0$ depends on
$\norm{v}_{0,\infty,\Omega}$, and that for
$v \in \sbr[0]{W^{1,\infty}(K)}^{\dim}$, $K\in\mathcal{T}$,
\begin{equation}
  \label{eq:Dboundary}
  \norm[0]{D(v)}_{0,\infty,\partial K} \leq C(1+\norm[0]{v}_{0,\infty,\partial K}) \le C(1+\norm{v}_{0,\infty,K}).
\end{equation}

\section{Stability, existence and consistency of the EDG method for
  the transport equation}
\label{sec:coerstabexisconsis}
In this section we show stability and existence of the solution to the
semi-discrete EDG method of the transport equation
\cref{eq:semidiscrete} as well as consistency of the method. For this we define the
following semi-norm on $\boldsymbol{C}_h$:
\begin{equation*}
  \tnorm{\boldsymbol{w}_h}_{c}^2 =
  \sum_{K\in \mathcal{T}}\del[1]{\, \norm{\nabla w_h}_K^2
    + h_K^{-1}\norm{w_h-\bar{w}_h}_{\partial K}^2}.
\end{equation*}
We first prove useful properties of the advective and diffusive parts
of the bi-linear form \cref{eq:bilinearform_Bh}.

\begin{lemma}
  \label{lem:coercivityBha}
  Let $u_h \in V_h$ be the velocity solution to \cref{eq:hdgwf}. Then
  for all $\boldsymbol{w}_h \in \boldsymbol{C}_h$,
  \begin{equation*}
    \label{eq:coercivityBha}
    B_h^a(u_h; \boldsymbol{w}_h,\boldsymbol{w}_h) = \frac{1}{2} \sum_{K\in \mathcal{T}^d} (\nabla \cdot u_h, w_h^2)_K +
    \frac{1}{2} \sum_{K\in \mathcal{T}} \norm[0]{ |u_h\cdot n|^{1/2}(w_h-\bar{w}_h) }^2_{\partial K}.
  \end{equation*}
\end{lemma}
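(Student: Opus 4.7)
The plan is to substitute $\boldsymbol{c}=\boldsymbol{w}=\boldsymbol{w}_h$ into the definition of $B_h^a$ in \cref{eq:bilinearform_Bha}, then use the product rule $w_h\,u_h\cdot\nabla w_h = \tfrac{1}{2} u_h\cdot\nabla(w_h^2)$ on the volume term, integrate by parts on each $K$, and finally do some bookkeeping on the facet integrals to collapse everything into $|u_h\cdot n|^{1/2}(w_h-\bar{w}_h)$ plus a divergence term.

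More precisely, I would first rewrite the volume term as
\begin{equation*}
  -\sum_{K\in\mathcal{T}}(w_h u_h,\nabla w_h)_K
  = -\tfrac{1}{2}\sum_{K\in\mathcal{T}}(u_h,\nabla(w_h^2))_K
  = \tfrac{1}{2}\sum_{K\in\mathcal{T}}(\nabla\cdot u_h, w_h^2)_K
  -\tfrac{1}{2}\sum_{K\in\mathcal{T}}\langle u_h\cdot n, w_h^2\rangle_{\partial K}.
\end{equation*}
By the exact mass conservation property \cref{eq:uh_properties_a}, $\nabla\cdot u_h=0$ on elements $K\in\mathcal{T}^s$, so the volume sum reduces to the sum over $\mathcal{T}^d$, which already matches the first term on the right-hand side of the claimed identity.

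Next I would combine the surviving $-\tfrac{1}{2}\langle u_h\cdot n,w_h^2\rangle_{\partial K}$ with the second term of $B_h^a$ by expanding
\begin{equation*}
  \langle w_h u_h\cdot n, w_h-\bar{w}_h\rangle_{\partial K} - \tfrac{1}{2}\langle u_h\cdot n, w_h^2\rangle_{\partial K}
  = \tfrac{1}{2}\langle u_h\cdot n, (w_h-\bar{w}_h)^2\rangle_{\partial K}
  - \tfrac{1}{2}\langle u_h\cdot n, \bar{w}_h^2\rangle_{\partial K}.
\end{equation*}
The $\bar{w}_h^2$ contribution vanishes after summation: by \cref{eq:uh_properties_b}, $u_h\cdot n$ is single-valued on interior facets (with opposite signs from the two adjacent elements), and $\bar{w}_h$ is single-valued by construction of $\bar{C}_h$, so interior contributions cancel; on $\partial\Omega$, \cref{eq:bc_s,eq:bc_d} together with \cref{eq:uh_properties_c} give $u_h\cdot n=0$.

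The last step is to fold the upwinding term $-\sum_K\langle u_h\cdot n(w_h-\bar{w}_h),w_h-\bar{w}_h\rangle_{\partial K^{\rm in}}$ into $\tfrac{1}{2}\sum_K\langle u_h\cdot n,(w_h-\bar{w}_h)^2\rangle_{\partial K}$. Splitting $\partial K=\partial K^{\rm in}\cup\partial K^{\rm out}$ and using $u_h\cdot n=\operatorname{sgn}(u_h\cdot n)|u_h\cdot n|$, the sum of the two contributions equals $\tfrac{1}{2}\sum_K\langle|u_h\cdot n|,(w_h-\bar{w}_h)^2\rangle_{\partial K}$, which is exactly the second term on the right-hand side. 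The main place one has to be careful is the sign bookkeeping in this last step, since the asymmetry between $\partial K^{\rm in}$ and $\partial K^{\rm out}$ in the upwind term is precisely what converts the signed $u_h\cdot n$ into $|u_h\cdot n|$ and yields the nonnegative dissipation.
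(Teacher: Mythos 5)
Your proof is correct and follows essentially the same route as the paper's own argument: integrate the advective volume term by parts, use the algebraic identity $w_h(w_h-\bar w_h)-\tfrac12 w_h^2=\tfrac12(w_h-\bar w_h)^2-\tfrac12\bar w_h^2$ (the paper writes it as $a(a-b)=\tfrac12(a^2-b^2+(a-b)^2)$), cancel the $\bar w_h^2$ facet contributions via single-valuedness of $u_h\cdot n$ and $\bar w_h$ together with $u_h\cdot n=0$ on $\partial\Omega$, and absorb the upwind term so that the signed $u_h\cdot n$ becomes $|u_h\cdot n|$. The only cosmetic difference is your citation for the vanishing boundary flux, where the relevant fact is simply that the discrete velocity has zero normal trace on $\partial\Omega$ (as the paper also uses in its compatibility argument) rather than \cref{eq:uh_properties_c}, which concerns interface facets.
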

\begin{proof}
  By definition of $B_h^a$ \cref{eq:bilinearform_Bha},
  \begin{multline*}
      B_h^a(u_h; \boldsymbol{w}_h,\boldsymbol{w}_h) =
      -\sum_{K\in \mathcal{T}} (w_h, u_h \cdot \nabla w_h)_K
      + \sum_{K\in \mathcal{T}} \langle (u_h \cdot n) \,w_h ,w_h-\bar{w}_h\rangle_{\partial K}
      \\
      - \sum_{K\in \mathcal{T}} \langle u_h\cdot n(w_h-\bar{w}_h), w_h-\bar{w}_h\rangle_{\partial K^{\rm in}}.      
  \end{multline*}
  Using integration by parts for the first term on the right hand
  side, the algebraic identity $a(a-b)=\frac12 (a^2-b^2+(a-b)^2)$, and
  \cref{eq:uh_properties}, we find
  \begin{equation*}
    \begin{split}
      B_h^a(u_h; \boldsymbol{w}_h, &\boldsymbol{w}_h) = 
      - \frac{1}{2} \sum_{K\in \mathcal{T}} \langle w_h^2, u_h\cdot n\rangle_{\partial K} 
      +\frac{1}{2} \sum_{K\in \mathcal{T}^d} (\nabla \cdot u_h, w_h^2)_K
      \\
      +& \frac{1}{2} \sum_{K\in \mathcal{T}} \langle u_h \cdot n,w_h^2-\bar{w}_h^2 
      + (w_h-\bar{w}_h)^2\rangle_{\partial K}   
      - \sum_{K\in \mathcal{T}} \langle u_h\cdot n,(w_h-\bar{w}_h)^2\rangle _{\partial K^{\rm in}}.
    \end{split}
  \end{equation*}
  The result follows using the single valuedness of $\bar{w}_h$ and
  $u_h\cdot n$ on element boundaries, and that
  $|u_h\cdot n|=u_h\cdot n$ on
  $\partial K\backslash\partial K^{\rm in}$ and
  $|u_h\cdot n|=-u_h\cdot n$ on $\partial K^{\rm in}$.
\end{proof}
\begin{lemma}[coercivity of $B_h^d$]
  \label{lem:coercivity_Bhd}
  Let $u$ denote the velocity solution to
  \cref{eq:system,eq:interface} and assume that
     $u\in \sbr[0]{W^{1,\infty}(\Omega)}^{\dim}$ such that
  $u^s\in \sbr[0]{H^{k+1}(\Omega^s)}^{\dim}$ and
  $u^d \in \sbr[0]{H^{k+1}(\Omega^d)}^{\dim}$ with $k\geq 1$. Let
  $u_h\in V_h$ be the velocity solution to \cref{eq:hdgwf}. There
  exists a constant $\beta_{c,0} > 0$ such that if
  $\beta_c>\beta_{c,0}$, then for all
  $\boldsymbol{w}_h \in \boldsymbol{C}_h$
  \begin{equation*}
    \label{eq:coercivityBhd}
    B_h^d(u_h; \boldsymbol{w}_h,\boldsymbol{w}_h) \geq C\tnorm{\boldsymbol{w}_h}_c^2.
  \end{equation*}
\end{lemma}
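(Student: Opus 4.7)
The plan is to show coercivity by the standard interior-penalty argument, with the twist that the diffusion tensor $D(u_h)$ must be handled elementwise and bounded using the $L^\infty$ estimate \cref{eq:uhbound-Linf}.

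First I would set $\boldsymbol{c} = \boldsymbol{w}_h$ in the definition of $B_h^d$, so the two symmetric off-diagonal terms combine into a single term $-2\sum_K \langle D(u_h)\nabla w_h\cdot n, w_h-\bar{w}_h\rangle_{\partial K}$. The remaining volume and penalty terms are bounded below directly by the ellipticity assumption \cref{eq:D_min}: with $x=\nabla w_h$ and $x=(w_h-\bar w_h)n$ respectively, one obtains
\begin{equation*}
  \sum_{K}(D(u_h)\nabla w_h,\nabla w_h)_K \ge D_{\min}\sum_K\norm{\nabla w_h}_K^2,
  \quad
  \sum_K \tfrac{\beta_c}{h_K}\langle [D(u_h)n](w_h-\bar w_h),(w_h-\bar w_h)n\rangle_{\partial K}
  \ge D_{\min}\beta_c \sum_K \tfrac{1}{h_K}\norm{w_h-\bar w_h}_{\partial K}^2.
\end{equation*}

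Next I would estimate the cross term. By Cauchy--Schwarz and \cref{eq:Dboundary}, combined with \cref{eq:uhbound-Linf}, we have $\norm{D(u_h)}_{0,\infty,\partial K} \le C(1+\norm{u_h}_{0,\infty,K}) \le C$, so
\begin{equation*}
  \envert{2\sum_K\langle D(u_h)\nabla w_h\cdot n, w_h-\bar w_h\rangle_{\partial K}}
  \le 2C\sum_K \norm{\nabla w_h}_{\partial K}\norm{w_h-\bar w_h}_{\partial K}.
\end{equation*}
The discrete trace inequality \cref{eq:2-trace} applied to $\nabla w_h \in [P_{\ell-1}(K)]^{\dim}$ gives $\norm{\nabla w_h}_{\partial K}\le Ch_K^{-1/2}\norm{\nabla w_h}_K$, and then Young's inequality with parameter $\epsilon>0$ yields a bound of the form $\epsilon \norm{\nabla w_h}_K^2 + C_\epsilon h_K^{-1}\norm{w_h-\bar w_h}_{\partial K}^2$.

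Finally I would combine the three estimates. Choosing $\epsilon = D_{\min}/2$ absorbs the $\norm{\nabla w_h}_K^2$ contribution, leaving a coefficient of $D_{\min}\beta_c - C_\epsilon$ on the jump term. Defining $\beta_{c,0}$ so that this coefficient is strictly positive whenever $\beta_c > \beta_{c,0}$ yields the desired bound $B_h^d(u_h;\boldsymbol{w}_h,\boldsymbol{w}_h) \ge C\tnorm{\boldsymbol{w}_h}_c^2$. The only delicate point is the uniform bound on $D(u_h)$ on element boundaries; everything else is a textbook interior-penalty coercivity argument, so the main work is simply to invoke \cref{eq:uhbound-Linf,eq:Dboundary} cleanly at the right step.
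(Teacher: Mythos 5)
Your proposal is correct and is essentially the argument the paper intends: the paper's proof is a one-line reference to the analogous interior-penalty coercivity result in \cite[Lemma 5.2]{Wells:2011}, augmented exactly as you do by \cref{eq:D_min} for the lower bounds on the volume and penalty terms and by \cref{eq:uhbound-Linf,eq:D_max,eq:Dboundary} to control $D(u_h)$ on element boundaries in the cross term. Your explicit chain (Cauchy--Schwarz, the discrete trace inequality \cref{eq:2-trace} applied to $\nabla w_h$, Young's inequality, then choosing $\beta_{c,0}$ so the jump coefficient stays positive) is the standard route and fills in the details the paper omits.
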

\begin{proof}
  The proof is similar to \cite[Lemma 5.2]{Wells:2011} but in addition
  using \cref{eq:D_min}, \cref{eq:uhbound-Linf}, \cref{eq:D_max} and
  \cref{eq:Dboundary} to bound the terms involving $D(u_h)$.  
\end{proof}
By \cref{lem:coercivityBha,lem:coercivity_Bhd} we may now conclude
that when the velocity solution to \cref{eq:system,eq:interface}
satisfies $u\in \sbr[0]{W^{1,\infty}(\Omega)}^{\dim}$ such that
$u^s\in \sbr[0]{H^{2}(\Omega^s)}^{\dim}$ and
$u^d\in \sbr[0]{H^{2}(\Omega^d)}^{\dim}$, and when $u_h \in V_h$ is
the solution to \cref{eq:hdgwf}, then
\begin{equation}
  \label{eq:coercivity}
  B_h(u_h; \boldsymbol{w}_h,\boldsymbol{w}_h)\geq C\tnorm{\boldsymbol{w}_h}_c^2 
  + \frac{1}{2} \sum_{K\in \mathcal{T}^d} (\nabla \cdot u_h, w_h^2)_K.
\end{equation}

In what follows we will require the following modification of the
classical Gr\"{o}nwall's lemma \cite[Corollary 1.2]{Bainov:book}.
\begin{lemma}
  \label{lem:classicGronwall}
  Let $f(t), g(t), h(t)$ be continuous functions defined on $[0,T]$
  such that $g(t)$ is non-negative and $h(t)$ is non-decreasing in
  $[0,T]$. Let $k$ be a non-negative constant. If
  \begin{equation*}
    f(t) + \int_0^t g(s) \dif s \leq h(t) + \int_0^t k f(s)\dif s \quad \forall t\in [0,T],
  \end{equation*}
  then
  \begin{equation*}
    f(t) + \int_0^t g(s) \dif s \leq h(t)e^{kt} \quad \forall t\in [0,T].
  \end{equation*}
\end{lemma}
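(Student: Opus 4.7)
The plan is to reduce this variant to the classical Grönwall inequality by absorbing the $\int_0^t g(s)\dif s$ term into a single nondecreasing quantity on the left, and then invoking the standard scalar form (which is exactly what Corollary 1.2 of Bainov provides).

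First I would set $F(t) := f(t) + \int_0^t g(s)\dif s$ and observe that $F$ is continuous on $[0,T]$ because $f$ is continuous and $g$ is continuous (hence its indefinite integral is $C^1$). The nonnegativity assumption $g(s)\ge 0$ gives the pointwise bound $f(t) \le F(t)$ on $[0,T]$, which is the only structural fact about $g$ we will use.

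Next, starting from the hypothesis
\begin{equation*}
  F(t) = f(t) + \int_0^t g(s)\dif s \le h(t) + \int_0^t k f(s)\dif s,
\end{equation*}
I would replace $f(s)$ by $F(s)$ under the integral sign using $f\le F$ and $k\ge 0$ (so the inequality is preserved), yielding
\begin{equation*}
  F(t) \le h(t) + \int_0^t k F(s)\dif s, \qquad t\in[0,T].
\end{equation*}
At this point the classical Grönwall lemma (with $h$ continuous and nondecreasing, and $k$ a nonnegative constant) applies to $F$ and immediately gives $F(t) \le h(t) e^{kt}$, which is the claim.

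The only genuine subtlety is verifying that $F$ satisfies the same continuity and monotonicity hypotheses required by the base Grönwall statement; this is immediate here, so no real obstacle arises. If one preferred a self-contained derivation rather than citing Bainov, the standard trick would be to let $\varphi(t) := \int_0^t k F(s)\dif s$, observe $\varphi'(t) = kF(t) \le k h(t) + k\varphi(t)$, and then multiply the differential inequality $\varphi'(t) - k\varphi(t) \le kh(t)$ by the integrating factor $e^{-kt}$ and integrate, using the monotonicity of $h$ to push an $h(t)$ outside the integral; combining with $F(t)\le h(t)+\varphi(t)$ finishes the argument.
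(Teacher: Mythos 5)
Your argument is correct. Note that the paper does not actually prove this lemma; it simply cites it as \cite[Corollary 1.2]{Bainov:book}, so there is no in-paper proof to compare against. Your reduction --- setting $F(t) := f(t) + \int_0^t g(s)\,\mathrm{d}s$, using $g \geq 0$ and $k \geq 0$ to replace $f(s)$ by $F(s)$ under the integral, and then applying the classical integral-form Gr\"onwall inequality with nondecreasing $h$ --- is the standard way to obtain this variant, and every step checks out, including the integrating-factor derivation you sketch for a self-contained version (where the monotonicity of $h$ is used exactly where needed to pull $h(t)$ out of the integral). The only cosmetic remark is that the pointwise bound $f(t) \le F(t)$ requires $\int_0^t g \ge 0$, which follows from $g \ge 0$, not merely from continuity; you state this correctly.
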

\begin{lemma}[stability]
  \label{lem:stability} 
  Assume that $u$, the velocity solution to
  \cref{eq:system,eq:interface}, satisfies
  $u\in \sbr[0]{W^{1,\infty}(\Omega)}^{\dim}$ such that
  $u^s\in \sbr[0]{H^{k+1}(\Omega^s)}^{\dim}$,
  $u^d\in \sbr[0]{H^{k+1}(\Omega^d)}^{\dim}$, $k\geq 1$ and
  $\nabla \cdot u^d\in L^{\infty}(\Omega^d)$. Let
  $c_0\in L^2(\Omega).$ Then the solution
  $\boldsymbol{c}_h\in \boldsymbol{C}_h$ to \cref{eq:semidiscrete}
  satisfies
  \begin{equation*}
    \norm[0]{c_h(t)}^2_{\Omega}
    + \int_0^{t}\tnorm{\boldsymbol{c}_h(s)}_c^2 \dif s
    \leq C\del[2]{ \norm{c_0}^2_{\Omega}
      + \int_0^{t} \norm{f(s)}_{\Omega}^2 \dif s}
    \quad \forall t \in [0, T].
  \end{equation*}
\end{lemma}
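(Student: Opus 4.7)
The natural plan is the standard energy argument: test the semi-discrete equation \cref{eq:semidiscrete} with $\boldsymbol{w}_h = \boldsymbol{c}_h(t)$, absorb the coercive part into the left-hand side using \cref{eq:coercivity}, handle the troublesome divergence term by an $L^\infty$ argument, bound the source term by Young's inequality, and close with the modified Gr\"onwall lemma (\cref{lem:classicGronwall}).

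First, I would rewrite the time-derivative term as $\tfrac{1}{2}\tfrac{d}{dt}\sum_K (\phi c_h, c_h)_K$, which is bounded below by $\tfrac{1}{2}\phi_{\min}\tfrac{d}{dt}\norm{c_h}_{\Omega}^2$ where $\phi_{\min}>0$ by hypothesis on $\phi$. Testing with $\boldsymbol{c}_h$ and applying \cref{eq:coercivity} gives
\begin{equation*}
  \tfrac{1}{2}\phi_{\min}\tfrac{d}{dt}\norm{c_h}_{\Omega}^2 + C\tnorm{\boldsymbol{c}_h}_c^2
  + \tfrac{1}{2}\sum_{K\in\mathcal{T}^d}(\nabla\cdot u_h, c_h^2)_K
  \le \sum_{K\in\mathcal{T}} (f, c_h)_K.
\end{equation*}
The right-hand side is bounded by Cauchy--Schwarz and Young's inequality as $\tfrac{1}{2}\norm{f}_\Omega^2 + \tfrac{1}{2}\norm{c_h}_\Omega^2$.

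The only non-routine step is controlling the divergence term $\sum_{K\in\mathcal{T}^d}(\nabla\cdot u_h, c_h^2)_K$. Using \cref{eq:uh_properties_a}, this equals $-\sum_{K\in\mathcal{T}^d}(\Pi_Q f^d, c_h^2)_K$, so I would bound it by $\norm{\Pi_Q f^d}_{0,\infty,\Omega^d}\norm{c_h}_{\Omega^d}^2$. Because $\Pi_Q$ acts element-wise on a shape-regular mesh, I can use the inverse inequality \cref{eq:inverse} combined with $L^2$-stability of $\Pi_Q$ and a reverse Hölder argument on each element to obtain $\norm{\Pi_Q f^d}_{0,\infty,K} \le C\norm{f^d}_{0,\infty,K}$, and since $f^d = -\nabla\cdot u^d \in L^\infty(\Omega^d)$ by assumption, we conclude $|\sum_{K\in\mathcal{T}^d}(\nabla\cdot u_h, c_h^2)_K| \le C\norm{c_h}_\Omega^2$. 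This is the step I expect to be the main obstacle, since one needs to be careful that the $L^\infty$ bound on $\Pi_Q f^d$ truly does not blow up with $h$.

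Combining the three estimates, moving the divergence term to the right-hand side, and integrating from $0$ to $t$ yields
\begin{equation*}
  \tfrac{1}{2}\phi_{\min}\norm{c_h(t)}_\Omega^2 + C\int_0^t \tnorm{\boldsymbol{c}_h(s)}_c^2 \dif s
  \le \tfrac{1}{2}\phi_{\min}\norm{c_h(0)}_\Omega^2 + \tfrac{1}{2}\int_0^t \norm{f(s)}_\Omega^2 \dif s + C\int_0^t \norm{c_h(s)}_\Omega^2 \dif s.
\end{equation*}
Since the initial discrete datum is the $L^2$-projection of $c_0$, we have $\norm{c_h(0)}_\Omega \le \norm{c_0}_\Omega$. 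Finally, I apply \cref{lem:classicGronwall} with $f(t)=\tfrac{1}{2}\phi_{\min}\norm{c_h(t)}_\Omega^2$, $g(t)=C\tnorm{\boldsymbol{c}_h(t)}_c^2$, and $h(t)=\tfrac{1}{2}\phi_{\min}\norm{c_0}_\Omega^2+\tfrac{1}{2}\int_0^t\norm{f(s)}_\Omega^2\dif s$ (non-decreasing in $t$), and absorb the factor $e^{CT}$ into the generic constant to obtain the claimed estimate.
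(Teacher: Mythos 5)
Your proposal is correct and follows essentially the same route as the paper: test with $\boldsymbol{c}_h$, invoke the coercivity bound \cref{eq:coercivity}, control the divergence term via $\nabla\cdot u_h = -\Pi_Q f^d$ and the $L^\infty$-stability of $\Pi_Q$ (which the paper asserts and you justify by the inverse inequality plus H\"older), and close with \cref{lem:classicGronwall}. The only cosmetic difference is your use of $\phi_{\min}$, which is slightly more careful than the paper's treatment of the piecewise-constant porosity.
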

\begin{proof}
  Take $\boldsymbol{w}_h=\boldsymbol{c}_h(t)$ in \cref{eq:semidiscrete}.
  Then by \cref{eq:coercivity} and the Cauchy--Schwarz inequality,
  \begin{equation}
    \label{eq:stability-aux}
    \begin{split}
      \tfrac{\phi}{2}\od{}{t}\norm[0]{c_h(t)}^2_{\Omega}
      + C\tnorm{\boldsymbol{c}_h(t)}_c^2
      & \leq \sum_{K\in \mathcal{T}}(f(t),c_h(t))_K - \frac{1}{2}\sum_{K\in \mathcal{T}^d}(\nabla \cdot u_h,c_h^2(t))_K
      \\    
      & \leq \norm[0]{f(t)}_{\Omega} \norm[0]{c_h(t)}_{\Omega}
      + \tfrac{1}{2}\norm[0]{\nabla \cdot u_h}_{0,\infty,\Omega^d}\norm[0]{c_h}_{\Omega}^2.
    \end{split}
  \end{equation}
  Note that by \cref{eq:uh_properties},
  $\norm{\nabla \cdot u_h}_{0,\infty,\Omega^d} = \norm[0]{\Pi_Q
    f^d}_{0,\infty,\Omega^d} \leq C\norm[0]{f^d}_{0,\infty,\Omega^d}$.
  Combining with \cref{eq:stability-aux}, integrating from $0$ to $t$
  for some $0 < t \leq T$, and applying the Cauchy--Schwarz and
  Young's inequalities,
  \begin{multline*}
    \frac{\phi}{2} \norm[0]{c_h(t)}^2_{\Omega}
    + C\int_0^{t} \tnorm{\boldsymbol{c}_h(s)}_c^2 \dif s
    \\
    \leq \frac{\phi}{2} \norm[0]{c_h(0)}^2_{\Omega}
    + \frac{1}{2} \int_0^{t} \norm[0]{f(s)}_{\Omega}^2 \dif s 
    + \frac{1}{2} \int_0^t (1+ C\norm[0]{f^d}_{0,\infty,\Omega^d}) \norm[0]{c_h(s)}_{\Omega}^2 \dif s.
  \end{multline*}
  The result follows by \cref{lem:classicGronwall} and recalling that
  $c_h(0)$ is the $L^2$-projection of $c_0$ into $C_h$.  
\end{proof}

A consequence of this stability result is existence and uniqueness,
which can be obtained by setting $c_0 = 0$ and $f = 0$.
 
\begin{theorem}[existence and uniqueness]
  The semi-discrete scheme \cref{eq:semidiscrete} has a unique
  solution $\boldsymbol{c}_h\in \boldsymbol{C}_h$.
\end{theorem}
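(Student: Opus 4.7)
The plan is to interpret \cref{eq:semidiscrete} as a finite-dimensional linear differential-algebraic system and solve it in two stages: first handle the algebraic facet part, then the remaining ODE in the interior coefficients. Expanding $c_h(t)$ and $\bar{c}_h(t)$ in bases of $C_h$ and $\bar{C}_h$ with coordinate vectors $\alpha(t)$ and $\beta(t)$, respectively, \cref{eq:semidiscrete} takes the block form
\begin{equation*}
  M\dot{\alpha} + A_{11}\alpha + A_{12}\beta = F_1(t),
  \qquad
  A_{21}\alpha + A_{22}\beta = F_2(t),
\end{equation*}
where $M$ is the symmetric positive-definite mass matrix arising from the $(\phi\,\cdot,\cdot)$ inner product on $C_h$, and the blocks $A_{ij}$ are the matrix representations of $B_h(u_h;\cdot,\cdot)$ restricted to the interior and facet components. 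The time derivative acts only on $\alpha$, so the second equation is purely algebraic.

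Uniqueness would follow directly from \cref{lem:stability}. The difference of two candidate solutions satisfies \cref{eq:semidiscrete} with $c_0 = 0$ and $f = 0$, so the stability bound gives $\|c_h(t)\|_\Omega^2 + \int_0^t \tnorm{\boldsymbol{c}_h(s)}_c^2 \dif s \leq 0$. This forces $c_h \equiv 0$ and, by the definition of $\tnorm{\cdot}_c$, also $\sum_K h_K^{-1}\|c_h - \bar{c}_h\|_{\partial K}^2 = 0$ for a.e.\ $t$; combined with $c_h \equiv 0$, this further yields $\bar{c}_h \equiv 0$ on $\Gamma_0$.

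For existence, I would show that the algebraic block $A_{22}$ is invertible and then eliminate $\beta$ pointwise in $t$. Invertibility amounts to nondegeneracy of $B_h(u_h;(0,\bar{c}_h),(0,\bar{w}_h))$ on $\bar{C}_h \times \bar{C}_h$. Testing with $\bar{w}_h = \bar{c}_h$, the advective part reduces (by the same algebraic identity used in \cref{lem:coercivityBha}) to $\sum_K \||u_h\cdot n|^{1/2}\bar{c}_h\|_{\partial K^{\rm in}}^2 \geq 0$, while the diffusive penalty is bounded below by $(\beta_c D_{\min}/h)\sum_K \|\bar{c}_h\|_{\partial K}^2$ using \cref{eq:D_min}. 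The restricted form is therefore coercive on $\bar{C}_h$, so $A_{22}$ is invertible. Substituting $\beta = A_{22}^{-1}(F_2 - A_{21}\alpha)$ reduces the system to a linear ODE $M\dot{\alpha} = G(t,\alpha)$ whose unique global solution on $[0,T]$, with initial datum $\alpha(0)$ the coordinate vector of the $L^2$-projection of $c_0$ into $C_h$, follows from standard linear ODE theory. The main subtlety is the invertibility of $A_{22}$; once established by the short positivity computation above, the reduction and the appeal to ODE theory are routine.
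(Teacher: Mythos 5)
Your proposal is correct, and for uniqueness it is exactly the paper's argument: the paper's entire proof is the one-line remark that existence and uniqueness follow from the stability lemma with $c_0=0$ and $f=0$. Where you genuinely add something is on the existence side. The paper leaves implicit the standard fact that, for a linear finite-dimensional system, uniqueness implies existence; but because \cref{eq:semidiscrete} is a differential-\emph{algebraic} system (the facet unknown $\bar{c}_h$ carries no time derivative), that implication does require knowing the algebraic block is nonsingular, which is precisely your $A_{22}$ computation. Your positivity argument for $A_{22}$ is sound: with $c_h=w_h=0$ the only surviving advective term is $-\sum_K\langle u_h\cdot n,\bar{c}_h^2\rangle_{\partial K^{\rm in}}\ge 0$, and the penalty term gives $\sum_K(\beta_c/h_K)\langle D(u_h)n\cdot n,\bar{c}_h^2\rangle_{\partial K}\ge \beta_c D_{\min}\sum_K h_K^{-1}\norm{\bar{c}_h}_{\partial K}^2$ by \cref{eq:D_min}, which vanishes only if $\bar{c}_h=0$. (One could reach the same conclusion even more quickly from \cref{eq:coercivity}, since $\tnorm{(0,\bar{w}_h)}_c^2=\sum_K h_K^{-1}\norm{\bar{w}_h}_{\partial K}^2$.) Two cosmetic points: the right-hand side of your algebraic equation is actually $F_2\equiv 0$, since the source term in \cref{eq:semidiscrete} tests only against $w_h$; and your Schur-complement reduction assumes $f(\cdot,t)$ is regular enough in $t$ for the resulting linear ODE to have a solution on $[0,T]$, which holds under the standing assumptions. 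Overall: same uniqueness mechanism as the paper, with a welcome explicit treatment of the DAE structure that the paper glosses over.
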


We next prove consistency of the method.

\begin{lemma}[consistency]
  \label{lem:consistency}
  If $c\in L^2(0,T;H^2(\Omega))$ solves \cref{eq:transport_system} and
  $u$ is the velocity solution to \cref{eq:system,eq:interface}, then
  for all $t\in (0,T]$,
  \begin{equation}
    \label{eq:consistency}
    \sum_{K\in \mathcal{T}} (\phi\, \partial_t c(t), w_h)_K + B_h(u; \boldsymbol{c}(t),\boldsymbol{w}_h)
    = \sum_{K\in \mathcal{T}}(f(t), w_h)_K \quad \forall \boldsymbol{w}_h \in \boldsymbol{C}_h,
  \end{equation}
  where $\boldsymbol{c}=(c, \bar{c})$ with $\bar{c}$ the restriction
  of $c$ to $\Gamma^0$.
\end{lemma}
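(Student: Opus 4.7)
The plan is to take the bi-linear form $B_h(u;\boldsymbol{c}(t),\boldsymbol{w}_h)$, integrate by parts on the volume terms, and use the smoothness of $c$ to kill the facet integrals that involve jumps. First I would set $\bar{c}(t) = c(t)|_{\Gamma_0}$; because $c(t) \in H^2(\Omega)$ is continuous on interior facets, we have $c(t) - \bar{c}(t) = 0$ on $\partial K$ for every $K \in \mathcal{T}$. This observation immediately annihilates the upwind term in $B_h^a$ (the third term in \cref{eq:bilinearform_Bha}) as well as the penalty and adjoint-consistency terms in $B_h^d$.

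Next I would integrate by parts element-wise:
\begin{equation*}
-\sum_{K\in\mathcal{T}}(c(t) u, \nabla w_h)_K = \sum_{K\in\mathcal{T}}(\nabla\cdot(c(t) u), w_h)_K - \sum_{K\in\mathcal{T}}\langle c(t) u\cdot n, w_h\rangle_{\partial K},
\end{equation*}
\begin{equation*}
\sum_{K\in\mathcal{T}}(D(u)\nabla c(t), \nabla w_h)_K = -\sum_{K\in\mathcal{T}}(\nabla\cdot(D(u)\nabla c(t)), w_h)_K + \sum_{K\in\mathcal{T}}\langle D(u)\nabla c(t)\cdot n, w_h\rangle_{\partial K}.
\end{equation*}
Combining with the surviving facet terms in $B_h^a$ and $B_h^d$ replaces $w_h$ on $\partial K$ by $\bar{w}_h$, so we are left with $-\sum_K \langle c(t) u\cdot n,\bar{w}_h\rangle_{\partial K}$ and $\sum_K \langle D(u)\nabla c(t)\cdot n,\bar{w}_h\rangle_{\partial K}$ respectively.

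The key step is then to argue that both of these facet sums vanish. On every interior facet shared by two elements the outward normals are opposite, and the quantities $u\cdot n$, $c(t)$, $D(u)\nabla c(t)\cdot n$, and $\bar{w}_h$ are single-valued: $c(t)$ and $\nabla c(t)\cdot n$ from $c(t) \in H^2(\Omega)$, $u\cdot n$ continuously across interior facets in $\Omega^s$ and $\Omega^d$ and across $\Gamma^I$ by the transmission condition \cref{eq:bc_I_u}, and $\bar{w}_h$ since it lies in $C^0(\Gamma_0)$. Hence neighboring contributions cancel. On $\partial\Omega$, the boundary conditions \cref{eq:bc_s,eq:bc_d} give $u\cdot n = 0$ and \cref{eq:transport_bc} gives $D(u)\nabla c(t)\cdot n = 0$, killing the remaining terms. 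Collecting everything,
\begin{equation*}
B_h(u;\boldsymbol{c}(t),\boldsymbol{w}_h) = \sum_{K\in\mathcal{T}}\bigl(\nabla\cdot(c(t) u - D(u)\nabla c(t)), w_h\bigr)_K,
\end{equation*}
so adding the time-derivative term and invoking the PDE \cref{eq:transport_eq} yields \cref{eq:consistency}.

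The only non-routine issue is the single-valuedness of $D(u)\nabla c(t)\cdot n$ on the Stokes--Darcy interface $\Gamma^I$, where $u$ itself need not be continuous. This is not really an obstacle here, however: the hypotheses $c\in L^2(0,T;H^2(\Omega))$ and $u$ regular enough that $D(u)$ is well-defined on facets are precisely what the statement assumes, and in practice one reads the interface flux in a one-sided sense consistent with \cref{eq:bc_I_u}. Everything else is a bookkeeping exercise in integration by parts.
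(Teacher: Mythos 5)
Your proof is correct and follows essentially the same route as the paper, whose own proof is only a one-line sketch (integrate by parts, use $c=\bar{c}$ on $\partial K$, the boundary conditions \cref{eq:bc_s,eq:bc_d,eq:transport_bc}, and the PDE \cref{eq:transport_eq}). The one point you flag --- single-valuedness of $D(u)\nabla c\cdot n$ across $\Gamma^I$ --- can be resolved cleanly rather than ``in a one-sided sense'': since \cref{eq:transport_eq} puts the distributional divergence of the total flux $cu-D(u)\nabla c$ in $L^2(\Omega)$, that flux lies in $H(\mathrm{div},\Omega)$ and its normal trace is single-valued on interior facets, and subtracting the (single-valued) advective part $cu\cdot n$ gives single-valuedness of the diffusive normal flux.
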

\begin{proof}
  The result follows by integrating by parts, noting that
  $c = \bar{c}$ on $\partial K$, the boundary conditions
  \cref{eq:bc_s,eq:bc_d}, and
  \cref{eq:transport_eq,eq:transport_bc}.  
\end{proof}

\section{Error analysis of the semi-discrete scheme}
\label{sec:erroranalsemidiscrete}

Given the velocity solution $u_h \in V_h$ to \cref{eq:hdgwf} we prove
that the solution $c_h \in C_h$ to the semi-discrete scheme
\cref{eq:semidiscrete} converges to the solution of
\cref{eq:transport_system} in the energy norm and in the
$L^2$-norm. We consider only the analysis of a compatible
discretization for the general case when $f^d \ne 0$ in
\cref{eq:mass}, i.e., we take $k > 1$ in $\boldsymbol{X}_h$ and
$\ell = k-1 > 0$ in \cref{eq:spaces_transport}.

\subsection{Error estimate of the concentration in the energy norm}
To prove convergence in the energy norm we will use the continuous
interpolant $\mathcal{I}c\in C_h\cap \mathcal{C}^0(\bar{\Omega})$ of
$c$ \cite{Brenner:book} and we set
$\bar{\mathcal{I}} c(t) = \mathcal{I} c|_{\Gamma^0}(t)\in \bar{C}_h$.
Denoting the restriction of $c$ to $\Gamma^0$ by $\bar{c}$, we split
the approximation errors as follows:
\begin{equation*}
  c - c_h = \xi_c -  \zeta_c
  \quad\text{and}\quad
  \bar{c} - \bar{c}_h = \bar{\xi}_c - \bar{\zeta}_c,
\end{equation*}
where
\begin{gather*}
  \begin{aligned}
    \xi_c &= c - \mathcal{I} c, &\quad \zeta_c &= c_h
    - \mathcal{I} c, & \quad \boldsymbol{\xi}_c &=(\xi_c, \bar{\xi}_c),
    \\
    \bar{\xi}_c &= \bar{c} - \bar{\mathcal{I}} c, & \quad \bar{\zeta}_c
    &= \bar{c}_h - \bar{\mathcal{I}} c,
    &\quad \boldsymbol{\zeta}_c &=(\zeta_c, \bar{\zeta}_c).
  \end{aligned}
  \label{eq:split12}
\end{gather*}
The following interpolation estimates hold
\cite[Chapter 4]{Brenner:book}:
\begin{equation}
  \label{eq:interp0}
  \norm{\xi_c}_{r,K} \leq C h_K^{s-r} \norm{c}_{s,K}, \quad r=0, 1, \quad 2 \le s \le k,
\end{equation}
and
\begin{equation}
  \label{eq:interpinf}
  \norm{\xi_c}_{r,\infty,K} \leq C h_K^{s-r} \norm{c}_{s,\infty,K}, \quad r=0, 1, \quad 1 \le s \le k.
\end{equation}
A straightforward consequence is
\begin{equation}
  \label{eq:xi_cbound}
  \tnorm{\boldsymbol{\xi}_c(t)}_c
  = \del[2]{\sum_{K\in \mathcal{T}}\|\nabla \xi_c(t)\|_{K}^2}^{1/2}
  \leq C h^{s-1}\|c(t)\|_{s,\Omega}, \quad 2\leq s\leq k.
\end{equation}
We will also require the following continuity results.

\begin{lemma} 
  \label{lem:contresult1}
  Let $u\in \sbr[0]{W^{1,\infty}(\Omega)}^{\dim}$ be the velocity
  solution to \cref{eq:system,eq:interface} such that
  $u^s \in \sbr[0]{H^{k+1}(\Omega^s)}^{\dim}$ and
  $u^d \in \sbr[0]{H^{k+1}(\Omega^d)}^{\dim}$, let
  $c \in L^2(0,T; H^{k}(\Omega))$ be the solution to
  \cref{eq:transport_system} and let $u_h \in V_h$ be the velocity
  solution to \cref{eq:hdgwf}. Then for any $0\leq t\leq T$,
  \begin{equation}
    \label{eq:continuity-1}
    B_h(u_h; \boldsymbol{\xi}_c(t), \boldsymbol{w}_h)
    \leq Ch^{k-1}\norm[0]{c(t)}_{k,\Omega} \tnorm{\boldsymbol{w}_h}_c,
  \end{equation}
  for all $\boldsymbol{w}_h\in \boldsymbol{C}_h$ and $k > 1$. 
\end{lemma}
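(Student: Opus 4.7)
The plan is to split $B_h = B_h^a + B_h^d$ according to \eqref{eq:bilinearform_Bh} and bound each contribution separately, relying heavily on the fact that the interpolant $\mathcal{I}c$ is continuous across element boundaries. This continuity immediately implies $\xi_c - \bar{\xi}_c = 0$ on every $\partial K$, which kills several terms in both $B_h^a$ and $B_h^d$ before any estimation is needed. Specifically, the third term in $B_h^a$ (the upwind stabilization) and both the penalty and adjoint-consistency terms in $B_h^d$ vanish identically.

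For the two surviving pieces of $B_h^a$, namely $-\sum_K(\xi_c u_h, \nabla w_h)_K$ and $\sum_K \langle \xi_c u_h\cdot n, w_h - \bar{w}_h\rangle_{\partial K}$, I will use $\|u_h\|_{0,\infty,\Omega} \le C$ from \eqref{eq:uhbound-Linf} together with the Cauchy--Schwarz inequality. The volume term is bounded by $\|u_h\|_{0,\infty,K}\|\xi_c\|_K\|\nabla w_h\|_K$, and \eqref{eq:interp0} with $s=k,r=0$ gives $\|\xi_c\|_K \le Ch_K^k\|c\|_{k,K}$, so summation over $K$ yields a bound $Ch^k\|c\|_{k,\Omega}\tnorm{\boldsymbol{w}_h}_c \le Ch^{k-1}\|c\|_{k,\Omega}\tnorm{\boldsymbol{w}_h}_c$. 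For the boundary term, I will use the continuous trace inequality \eqref{eq:2-trace-continuous} combined with \eqref{eq:interp0} to get $\|\xi_c\|_{\partial K}\le Ch_K^{k-1/2}\|c\|_{k,K}$, then factor $h_K^{-1/2}\|w_h-\bar{w}_h\|_{\partial K}$ to obtain the $\tnorm{\cdot}_c$ contribution after Cauchy--Schwarz; again the net power is $h^k \le h^{k-1}$.

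The diffusive part $B_h^d$ reduces (after the two vanishing terms) to $\sum_K(D(u_h)\nabla\xi_c, \nabla w_h)_K - \sum_K \langle[D(u_h)\nabla\xi_c]\cdot n, w_h-\bar{w}_h\rangle_{\partial K}$. The volume term is handled with \eqref{eq:D_max} and the estimate $\|\nabla\xi_c\|_K\le Ch_K^{k-1}\|c\|_{k,K}$ from \eqref{eq:interp0} with $s=k,r=1$, producing directly $Ch^{k-1}\|c\|_{k,\Omega}\tnorm{\boldsymbol{w}_h}_c$ and establishing the sharpness of the claimed rate. For the boundary term I will combine \eqref{eq:Dboundary} (with $\|u_h\|_{0,\infty,K}\le C$) with a trace bound on $\nabla\xi_c$: applying \eqref{eq:2-trace-continuous} to each component of $\nabla\xi_c$ gives $\|\nabla\xi_c\|_{\partial K}^2 \le C(h_K^{-1}\|\nabla\xi_c\|_K^2 + h_K|\xi_c|_{2,K}^2)$, which together with the interpolation bound on $|\xi_c|_{2,K}$ yields $\|\nabla\xi_c\|_{\partial K}\le Ch_K^{k-3/2}\|c\|_{k,K}$, and the usual $h_K^{-1/2}$ factoring followed by Cauchy--Schwarz closes the estimate.

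The main obstacle is the last estimate: the trace bound on $\nabla\xi_c$ requires the second-derivative interpolation error $|\xi_c|_{2,K}$ to be well-defined, which is precisely why the hypothesis $k>1$ (equivalently, $c\in H^k$ with $k\ge 2$) appears in the statement. Once this regularity is available, the rate $h^{k-3/2}$ combines with the facet weight $h^{1/2}$ coming from the definition of $\tnorm{\cdot}_c$ to give exactly $h^{k-1}$. All other terms in fact produce $h^k$, so the diffusive boundary term is the rate-limiting step.
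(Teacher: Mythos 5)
Your proposal is correct and follows essentially the same route as the paper: split $B_h$ into $B_h^a+B_h^d$, use the continuity of $\mathcal{I}c$ to make $\xi_c-\bar\xi_c$ vanish on facets (killing the upwind, penalty, and adjoint-consistency terms), bound the surviving advective terms at rate $h^k$ via \cref{eq:uhbound-Linf}, \cref{eq:interp0} and \cref{eq:2-trace-continuous}, and identify the diffusive terms (bounded via \cref{eq:D_max} and the trace of $\nabla\xi_c$) as the rate-limiting $h^{k-1}$ contribution. The only cosmetic caveat is that your trace bound on $\nabla\xi_c$ needs the interpolation estimate for $|\xi_c|_{2,K}$, which \cref{eq:interp0} states only for $r=0,1$; the paper implicitly relies on the same standard extension, so this is not a gap relative to its argument.
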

\begin{proof}
  By definition of $B_h$ \cref{eq:bilinearform_Bh},
  \begin{equation*}
    B_h(u_h;\boldsymbol{\xi}_c,\boldsymbol{w}_h)
    = B_h^a(u_h;\boldsymbol{\xi}_c,\boldsymbol{w}_h)
    + B_h^d(u_h;\boldsymbol{\xi}_c,\boldsymbol{w}_h).    
  \end{equation*}
  We will first bound $B_h^a$. Noting that $\xi_c-\bar{\xi}_c$
  vanishes on facets, we have by \cref{eq:bilinearform_Bha}
  \begin{equation*}
      B_h^a(u_h; \boldsymbol{\xi}_c, \boldsymbol{w}_h)
      = -\sum_{K\in \mathcal{T}}  (\xi_c , u_h \cdot \nabla w_h )_K
      + \sum_{K\in \mathcal{T}}\langle \xi_c \, u_h \cdot n , w_h-\bar{w}_h \rangle_{\partial K}
      =  J_1 + J_2.
  \end{equation*}
  We start by bounding $J_1$. By \cref{eq:interp0}, we have
  \begin{equation*}
      J_{1} \leq C \norm{u_h}_{0,\infty, \Omega} \norm{\xi_c}_{\Omega} 
      \norm{\nabla w_h}_{\Omega}
      \leq Ch^{k} \norm{u_h}_{0,\infty, \Omega} \norm{c}_{k,\Omega} 
      \tnorm{\boldsymbol{w}_h}_c.
  \end{equation*}
  By \cref{eq:2-trace-continuous,eq:inf-trace-continuous,eq:interp0},
  \begin{equation*}
    \begin{split}
      J_{2}
      & \leq C\norm{u_h}_{0,\infty, \Omega}
      \del[2]{\sum_{K\in \mathcal{T}}h_K\norm{\xi_c}_{\partial K}^2}^{1/2}
      \del[2]{\sum_{K\in \mathcal{T}}h_K^{-1}\norm{w_h-\bar{w}_h}_{\partial K}^2}^{1/2}
      \\
      & \leq Ch^{k} \norm{u_h}_{0,\infty, \Omega} \norm{c}_{k,\Omega}
      \tnorm{\boldsymbol{w}_h}_c.
    \end{split}
  \end{equation*}
  Combining the bounds for $J_1$ and $J_2$ and using \cref{eq:uhbound-Linf} we obtain
  \begin{equation}
    \label{eq:Bha-bound}
    B_h^a(u_h; \boldsymbol{\xi}_c, \boldsymbol{w}_h)
    \leq Ch^{k}\norm{c}_{k,\Omega}\tnorm{\boldsymbol{w}_h}_c.
  \end{equation}
  We next bound $B_h^d$. Since
  $u_h \in \sbr[0]{L^{\infty}(\Omega)}^{\dim}$ by
  \cref{eq:uhbound-Linf}, we can use \cref{eq:D_max} to bound
  $D(u_h)$. Then, noting that $\xi_c-\bar{\xi}_c$ vanishes
  on facets, and using \cref{eq:2-trace-continuous} and \cref{eq:interp0},
  \begin{equation}
    \label{eq:Bhd-bound}
    \begin{split}
      B_h^d(u_h;\boldsymbol{\xi}_c,\boldsymbol{w}_h)
      \leq & D_{\max}\norm{\nabla \xi_c}_{\Omega} \tnorm{\boldsymbol{w}_h}_c
      \\
      & + D_{\max}\del[2]{\sum_{K\in \mathcal{T}} h_K\norm{\nabla \xi_c}_{\partial K}^2}^{1/2}
      \del[2]{\sum_{K\in \mathcal{T}}h_K^{-1}\norm{w_h-\bar{w}_h}_{\partial K}^2}^{1/2}
      \\
      \leq & C h^{k-1} \norm{c}_{k,\Omega} \tnorm{\boldsymbol{w}_h}_{c}.
    \end{split}
  \end{equation}
  The result follows after combining
  \cref{eq:Bha-bound,eq:Bhd-bound}.  
\end{proof}

\begin{lemma}
  \label{lem:contresult2}
  Let $u\in \sbr[0]{W^{1,\infty}(\Omega)}^{\dim}$ be the velocity
  solution to \cref{eq:system,eq:interface} such that
  $u^s \in \sbr[0]{H^{k+1}(\Omega^s)}^{\dim}$ and
  $u^d \in \sbr[0]{H^{k+1}(\Omega^d)}^{\dim}$,
  $c \in L^2(0,T; H^{k}(\Omega)\cap W^{1,\infty}(\Omega))$ be the
  solution to \cref{eq:transport_system} and let $u_h \in V_h$ be the
  velocity solution to \cref{eq:hdgwf}. Then for any $0\leq t\leq T$,
  \begin{equation}
    \label{eq:continuity-2}
    B_h(u_h; \boldsymbol{c}(t), \boldsymbol{w}_h) - B_h(u; \boldsymbol{c}(t), \boldsymbol{w}_h)
    \leq
    Ch^{k-1} \del[1]{\norm[0]{c(t)}_{1,\infty,\Omega} + \norm[0]{c(t)}_{k,\Omega}}\tnorm{\boldsymbol{w}_h}_{c},
  \end{equation}
  for all $\boldsymbol{w}_h\in \boldsymbol{C}_h$, where 
  $\boldsymbol{c}=(c, \bar{c})$ and $k > 1$.
\end{lemma}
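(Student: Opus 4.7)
My plan is to split $B_h = B_h^a + B_h^d$ and bound each difference separately. Since $\boldsymbol{c} = (c,\bar{c})$ with $c$ continuous and $\bar{c}$ its facet trace, $c-\bar{c}$ vanishes identically on every facet. This immediately kills the upwind term of $B_h^a$ and both the penalty and adjoint-consistency terms of $B_h^d$ in the difference, leaving only the volume terms and one consistency-type boundary term in each piece.

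For the advective difference, linearity in $u$ gives
\begin{equation*}
  B_h^a(u_h;\boldsymbol{c},\boldsymbol{w}_h) - B_h^a(u;\boldsymbol{c},\boldsymbol{w}_h)
  = -\sum_{K\in \mathcal{T}}(c(u_h-u),\nabla w_h)_K
  + \sum_{K\in \mathcal{T}}\langle c(u_h-u)\cdot n, w_h - \bar{w}_h\rangle_{\partial K}.
\end{equation*}
I would bound the volume term by Cauchy--Schwarz using $c \in L^{\infty}(\Omega)$ and the $L^2$ velocity error \cref{eq:L2error-u}, and the boundary term by inserting $h_K^{1/2}\cdot h_K^{-1/2}$ and applying the elementwise trace estimate \cref{eq:bounduhuboundary} together with Cauchy--Schwarz over $K$; both contributions are of order $h^{k+1}\norm{c}_{0,\infty,\Omega}\tnorm{\boldsymbol{w}_h}_c$.

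For the diffusive difference, the Lipschitz hypothesis \cref{eq:DLipschitz} yields the pointwise bound $\envert{D(u)-D(u_h)} \le C\envert{u-u_h}$. Using $\nabla c \in L^{\infty}(\Omega)$, the volume piece is bounded by $C\norm{c}_{1,\infty,\Omega}\norm{u-u_h}_{\Omega}\tnorm{\boldsymbol{w}_h}_c \le Ch^{k+1}\norm{c}_{1,\infty,\Omega}\tnorm{\boldsymbol{w}_h}_c$, and the surviving boundary term is handled exactly as in the advective case via \cref{eq:bounduhuboundary}, again producing $O(h^{k+1})$.

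Combining these gives the bound $Ch^{k+1}\norm{c}_{1,\infty,\Omega}\tnorm{\boldsymbol{w}_h}_c$, which is trivially dominated by the stated right-hand side for $h$ bounded; the extra $\norm{c}_{k,\Omega}$ term on the right is slack here but is consistent with the weaker rate coming from \cref{lem:contresult1}. The one small bookkeeping obstacle is in the boundary integrals, where one must factor the elementwise trace bound $\norm{u-u_h}_{\partial K} \le Ch_K^{k+1/2}$ as $h_K^{k+1}\cdot h_K^{-1/2}$, so that the remaining $h_K^{-1/2}\norm{w_h-\bar{w}_h}_{\partial K}$ piece is absorbed into $\tnorm{\boldsymbol{w}_h}_c$ after Cauchy--Schwarz on the sum over $K$. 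Beyond this, no tool other than \cref{eq:DLipschitz}, \cref{eq:L2error-u}, and \cref{eq:bounduhuboundary} is required.
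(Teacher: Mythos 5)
Your decomposition is the same as the paper's for most of the terms: the upwind, penalty, and adjoint-consistency terms drop because $c=\bar c$ on facets, the advective difference $G_1+G_2$ and the diffusive volume term $H_1$ are bounded exactly as in the paper's proof, all at order $h^{k+1}$. The gap is in your treatment of the surviving diffusive boundary term
\begin{equation*}
  H_2=-\sum_{K\in\mathcal{T}}\langle[(D(u_h)-D(u))\nabla c]\cdot n,\,w_h-\bar w_h\rangle_{\partial K},
\end{equation*}
which you claim is "handled exactly as in the advective case." It is not. In the advective boundary term you only need $\norm{c}_{0,\infty,\partial K}\le C\norm{c}_{0,\infty,K}$, which \cref{eq:inf-trace-continuous} gives for $c\in W^{1,\infty}(K)$. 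Your direct bound of $H_2$ needs the analogous control $\norm{\nabla c}_{0,\infty,\partial K}\le C\norm{\nabla c}_{0,\infty,K}$, and \cref{eq:inf-trace-continuous} delivers that only for $\nabla c\in W^{1,\infty}(K)$, i.e.\ $c\in W^{2,\infty}$ --- a regularity the lemma does not assume ($c$ is only in $H^k\cap W^{1,\infty}$). This is precisely why the paper inserts the interpolant and splits $H_2=H_{21}+H_{22}+H_{23}$: the factor $\nabla\mathcal{I}c$ in $H_{21}$ is a polynomial whose sup on $\partial K$ is controlled by interior quantities, while the remainders $\nabla(\mathcal{I}c-c)$ in $H_{22},H_{23}$ are measured in $L^2(\partial K)$ via \cref{eq:2-trace-continuous} and \cref{eq:interp0}. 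Those remainder terms are what produce the $h^{k-1}\norm{c}_{k,\Omega}$ contribution in \cref{eq:continuity-2}; your remark that this term is "slack" is a signal that something is off --- it is not slack, it is the price of avoiding the $W^{2,\infty}$ assumption, and it is also why the lemma is stated at rate $h^{k-1}$ rather than the $h^{k+1}$ you arrive at.

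To repair your argument you would either have to add $c\in L^2(0,T;W^{2,\infty}(\Omega))$ to the hypotheses (changing the lemma), or justify that the trace of an $H^{k-1}\cap L^\infty$ vector field onto $\partial K$ is essentially bounded by its interior sup (a claim not covered by any inequality in \cref{sec:usefulineq}), or adopt the paper's interpolant splitting and accept the $h^{k-1}\del[1]{\norm{c}_{1,\infty,\Omega}+\norm{c}_{k,\Omega}}$ bound. Everything else in your proposal, including the $h_K^{1/2}\cdot h_K^{-1/2}$ bookkeeping for the facet sums, matches the paper.
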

\begin{proof}  
  We first note that
  \begin{equation*}
    B_h(u_h; \boldsymbol{c} ,\boldsymbol{w}_h) - B_h(u; \boldsymbol{c}, \boldsymbol{w}_h)
    =
    B_h^a(u_h-u; \boldsymbol{c}, \boldsymbol{w}_h)
    +
    [B_h^d(u_h; \boldsymbol{c}, \boldsymbol{w}_h) - B_h^d(u; \boldsymbol{c}, \boldsymbol{w}_h)].
  \end{equation*}
  Since $c = \bar{c}$ on element boundaries,
  \begin{multline*}  
    B_h^a(u_h-u; \boldsymbol{c}, \boldsymbol{w}_h)
    = -\sum_{K\in\mathcal{T}_h}(c (u_h-u),\nabla w_h)_K 
    + \sum_{K\in \mathcal{T}_h}\langle c (u_h-u)\cdot n, w_h -\bar{w}_h)_{\partial K} 
    \\
    = G_1 + G_2. 
  \end{multline*}
  By \cref{eq:L2error-u},
  \begin{equation*}
    G_1 \leq \norm{c}_{0,\infty, \Omega} \norm{u-u_h}_{\Omega}\norm{\nabla w_h}_{\Omega} 
    \leq C h^{k+1} \norm{c}_{0,\infty, \Omega}\tnorm{\boldsymbol{w}_h}_c.
  \end{equation*}
  Furthermore, by \cref{eq:inf-trace-continuous,eq:bounduhuboundary},
  \begin{equation*}
    \begin{split}
      G_2
      &\leq C \norm{c}_{0,\infty,\Omega}
      \del[2]{\sum_{K\in \mathcal{T}_h}h_K \norm{u-u_h}^2_{\partial K}}^{1/2} \del[2]{\sum_{K\in \mathcal{T}_h}h_K^{-1} \norm{w_h-\bar{w}_h}^2_{\partial K}}^{1/2}
      \\
      &\leq Ch^{k+1} \norm{c}_{0,\infty, \Omega} \tnorm{\boldsymbol{w}_h}_c.      
    \end{split}    
  \end{equation*}
  It follows that
  $ B_h^a(u_h-u; \boldsymbol{c}, \boldsymbol{w}_h) \leq Ch^{k+1}
  \norm{c}_{0,\infty, \Omega} \tnorm{\boldsymbol{w}_h}_c$.
  
  Consider now
  $B_h^d(u_h; \boldsymbol{c}, \boldsymbol{w}_h) - B_h^d(u;
  \boldsymbol{c}, \boldsymbol{w}_h)$. Since $c = \bar{c}$ on
  $\partial K$,
  \begin{equation*}
    \begin{split}
      B_h^d(u_h; \boldsymbol{c},\boldsymbol{w}_h)
      - B_h^d(u; \boldsymbol{c},\boldsymbol{w}_h)
      =& \sum_{K\in \mathcal{T}} ( (D(u_h)-D(u))\nabla c, \nabla w_h )_K
      \\
      &- \sum_{K\in \mathcal{T}} 
      \langle [(D(u_h)-D(u))\nabla c] \cdot n , w_h-\bar{w}_h\rangle_{\partial K}
      \\
      =& H_1 + H_2.      
    \end{split}
  \end{equation*}  
  Using \cref{eq:DLipschitz,eq:L2error-u},
  \begin{equation*}
      H_1
      \leq C\norm{u_h-u}_{\Omega} \norm{\nabla c}_{0,\infty, \Omega}\norm{\nabla w_h}_{\Omega}
      \leq C h^{k+1} \norm{c}_{1,\infty,\Omega} \tnorm{\boldsymbol{w}_h}_c.
  \end{equation*}
  To bound $H_2$, we first rewrite it as follows:
  \begin{align*}
    H_2
    =&  \sum_{K\in \mathcal{T}}\langle [(D(u_h)-D(u))\nabla \mathcal{I} c] \cdot n , w_h-\bar{w}_h\rangle_{\partial K}
    \\
    &+ \sum_{K\in \mathcal{T}}\langle [D(u)\nabla (\mathcal{I} c - c)] \cdot n , w_h-\bar{w}_h\rangle_{\partial K}
    \\
    &- \sum_{K\in \mathcal{T}}\langle [D(u_h)\nabla (\mathcal{I} c - c)] \cdot n , w_h-\bar{w}_h\rangle_{\partial K}
    \\
    =&  H_{21}+H_{22}+H_{23}.
  \end{align*}
  By \cref{eq:bounduhuboundary},
  \cref{eq:DLipschitz}, and \cref{eq:interpinf},
  \begin{align*}
    H_{21}
    &\leq \sum_{K\in \mathcal{T}}C\norm[0]{u_h-u}_{\partial K}\norm[0]{\nabla \mathcal{I} c}_{0,\infty,K}\norm[0]{w_h-\bar{w}_h}_{\partial K}
    \\
    &\leq C \norm[0]{c}_{1,\infty,\Omega}\del[2]{\sum_{K\in \mathcal{T}}h_K\norm[0]{u_h-u}^2_{\partial K}}^{1/2}
      \del[2]{\sum_{K\in \mathcal{T}}h_K^{-1}\norm[0]{w_h-\bar{w}_h}^2_{\partial K}}^{1/2}
    \\
    & \leq Ch^{k+1}\norm[0]{c}_{1,\infty,\Omega}\tnorm{\boldsymbol{w}_h}_c.
  \end{align*}
  Using \cref{eq:Dboundary}, \cref{eq:2-trace-continuous}, and
  \cref{eq:interp0}, we obtain
  \begin{align*}
    H_{22}
    &\leq C(1 + \norm[0]{u}_{0,\infty,\Omega}) \del[2]
      {\sum_{K\in \mathcal{T}}h_K\norm{\nabla (\mathcal{I} c - c)}_{\partial K}^2}^{1/2}	\del[2]
      {\sum_{K\in \mathcal{T}}h_K^{-1}\norm[0]{w_h-\bar{w}_h}^2_{\partial K}}^{1/2}
    \\
    &\leq C(1 + \norm[0]{u}_{0,\infty,\Omega}) h^{k-1}\norm[0]{c}_{k,\Omega}\tnorm{\boldsymbol{w}_h}_c.
  \end{align*}
  Following similar steps as above, this time using
  \cref{eq:inf-trace-continuous,eq:D_max}, we obtain
  \begin{equation*}
    H_{23}
    \leq D_{\max}h^{k-1}\norm[0]{c}_{k,\Omega}\tnorm{\boldsymbol{w}_h}_c.
  \end{equation*}
  The result follows.  
\end{proof}
We next find an estimate of the concentration approximation error in
the energy norm.
\begin{lemma}
  \label{lem:energynormerror}
  Let $u\in \sbr[0]{W^{1,\infty}(\Omega)}^{\dim}$ be the velocity
  solution to \cref{eq:system,eq:interface} such that
  $u^s \in \sbr[0]{H^{k+1}(\Omega^s)}^{\dim}$ and
  $u^d \in \sbr[0]{H^{k+1}(\Omega^d)}^{\dim}$ with $k > 1$, and let
  $c \in L^2(0,T; H^{k}(\Omega)\cap W^{1,\infty}(\Omega))$ be the
  solution to \cref{eq:transport_system} such that
  $\partial_t c\in L^2(0, T; H^{k}(\Omega))$ and
  $c_0\in H^{k}(\Omega)$. Furthermore, let $u_h \in V_h$ be the
  velocity solution to \cref{eq:hdgwf}. Then,
  \begin{equation*}
    \label{eq:energynorm-simplified}
    \norm[0]{\zeta_c(t)}^2_{\Omega}
    + \int_0^{t} \tnorm{\boldsymbol{\zeta}_c(s)}_c^2 \dif s
    \leq Ch^{2(k-1)} \quad \forall t\in[0,T].
  \end{equation*}
\end{lemma}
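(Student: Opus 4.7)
The plan is to derive an error equation by subtracting the semi-discrete scheme \cref{eq:semidiscrete} from the consistency identity \cref{eq:consistency}, then to decompose the error as $c - c_h = \xi_c - \zeta_c$ and test with $\boldsymbol{w}_h = \boldsymbol{\zeta}_c$. After manipulation this gives, for all $t > 0$,
\begin{equation*}
  \sum_{K\in\mathcal{T}}(\phi\,\partial_t\zeta_c,\zeta_c)_K + B_h(u_h;\boldsymbol{\zeta}_c,\boldsymbol{\zeta}_c)
  = \sum_{K\in\mathcal{T}}(\phi\,\partial_t\xi_c,\zeta_c)_K + B_h(u_h;\boldsymbol{\xi}_c,\boldsymbol{\zeta}_c) + \bigl[B_h(u;\boldsymbol{c},\boldsymbol{\zeta}_c) - B_h(u_h;\boldsymbol{c},\boldsymbol{\zeta}_c)\bigr].
\end{equation*}
Applying the coercivity bound \cref{eq:coercivity} on the left-hand side yields $\tfrac{\phi}{2}\tfrac{d}{dt}\|\zeta_c\|^2_\Omega + C\tnorm{\boldsymbol{\zeta}_c}_c^2$ up to a term involving $\tfrac12\sum_{K\in\mathcal{T}^d}(\nabla\cdot u_h,\zeta_c^2)_K$, which is moved to the right and controlled, as in the stability proof, via $\|\nabla\cdot u_h\|_{0,\infty,\Omega^d}\|\zeta_c\|_\Omega^2 \le C\|f^d\|_{0,\infty,\Omega^d}\|\zeta_c\|_\Omega^2$ thanks to \cref{eq:uh_properties_a}.

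Next, I would bound each term on the right. The time-derivative-interpolation term is handled by Cauchy--Schwarz and the interpolation estimate \cref{eq:interp0}: $\sum_K(\phi\,\partial_t\xi_c,\zeta_c)_K \le Ch^k\|\partial_tc\|_{k,\Omega}\|\zeta_c\|_\Omega$. The term $B_h(u_h;\boldsymbol{\xi}_c,\boldsymbol{\zeta}_c)$ is bounded by $Ch^{k-1}\|c\|_{k,\Omega}\tnorm{\boldsymbol{\zeta}_c}_c$ using \cref{lem:contresult1}, while the bi-linear form difference is bounded by $Ch^{k-1}(\|c\|_{1,\infty,\Omega} + \|c\|_{k,\Omega})\tnorm{\boldsymbol{\zeta}_c}_c$ using \cref{lem:contresult2}. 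Applying Young's inequality to absorb the $\tnorm{\boldsymbol{\zeta}_c}_c$ factors into the coercive left-hand side leaves only terms of the form $Ch^{2(k-1)}\Phi(t) + C\|\zeta_c\|_\Omega^2$, where $\Phi(t)$ collects the relevant norms of $c$ and $\partial_tc$.

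Integrating from $0$ to $t$ then produces
\begin{equation*}
  \|\zeta_c(t)\|_\Omega^2 + \int_0^t \tnorm{\boldsymbol{\zeta}_c(s)}_c^2\,ds
  \le \|\zeta_c(0)\|_\Omega^2 + Ch^{2(k-1)}\int_0^t \Phi(s)\,ds + C\int_0^t \|\zeta_c(s)\|_\Omega^2\,ds.
\end{equation*}
The initial data term is handled by writing $\zeta_c(0) = (c_h(0) - c_0) + (c_0 - \mathcal{I}c_0)$ and invoking standard $L^2$-projection and interpolation estimates, giving $\|\zeta_c(0)\|_\Omega \le Ch^k\|c_0\|_{k,\Omega}$. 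Finally, \cref{lem:classicGronwall} with $f(t) = \|\zeta_c(t)\|_\Omega^2$, $g(t) = \tnorm{\boldsymbol{\zeta}_c(t)}_c^2$, and $h(t) = Ch^{2(k-1)}(1 + \int_0^t\Phi(s)\,ds)$ gives the stated bound.

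The main obstacle I anticipate is the careful decomposition of $B_h(u;\boldsymbol{c},\cdot) - B_h(u_h;\boldsymbol{c}_h,\cdot)$: one must add and subtract $B_h(u_h;\boldsymbol{c},\cdot)$ to isolate a velocity-error piece (controlled by \cref{lem:contresult2}, which produces only $h^{k-1}$ since the diffusive facet terms involve $\nabla(\mathcal{I}c - c)$) from a concentration-error piece which, when split further via $\boldsymbol{c} - \boldsymbol{c}_h = \boldsymbol{\xi}_c - \boldsymbol{\zeta}_c$, leaves the coercive $B_h(u_h;\boldsymbol{\zeta}_c,\boldsymbol{\zeta}_c)$ on the left and the controllable $B_h(u_h;\boldsymbol{\xi}_c,\boldsymbol{\zeta}_c)$ on the right. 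The restriction $k > 1$ is forced here by the $h^{k-1}$ rate coming from the diffusive interpolation bound in \cref{lem:contresult1,lem:contresult2}, consistent with the assumption in the lemma.
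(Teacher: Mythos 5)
Your proposal is correct and follows essentially the same route as the paper: the same error equation obtained by subtracting the consistency identity from the scheme, testing with $\boldsymbol{\zeta}_c$, coercivity \cref{eq:coercivity} with the divergence term controlled via \cref{eq:uh_properties_a} as in \cref{lem:stability}, the interpolation bound \cref{eq:interp0} for the $\partial_t\xi_c$ term and for $\zeta_c(0)$, the continuity results \cref{lem:contresult1,lem:contresult2} with Young's inequality, and finally \cref{lem:classicGronwall}. No gaps.
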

\begin{proof}
  Subtracting \cref{eq:consistency} from
  \cref{eq:semidiscrete} and splitting the error,
  \begin{multline*}
    \sum_{K\in \mathcal{T}}(\phi \partial_t \zeta_c, w_h)_K 
    + B_h(u_h; \boldsymbol{\zeta}_c, \boldsymbol{w}_h) 
    =
    \sum_{K\in \mathcal{T}}(\phi \partial_t \xi_c, w_h)_K 
    \\
    + B_h(u_h; \boldsymbol{\xi}_c, \boldsymbol{w}_h) 
    -  \sbr[1]{B_h(u_h; \boldsymbol{c}, \boldsymbol{w}_h)
    - B_h(u;\boldsymbol{c}, \boldsymbol{w}_h)}.
  \end{multline*}
  Setting $\boldsymbol{w}_h = \boldsymbol{\zeta}_c(t)$, using
  coercivity \cref{eq:coercivity}, and integrating from $0$ to $t$,
  where $0\leq t\leq T$, we obtain
  \begin{multline*}
    \label{eq:splittingbounds}
      \frac{\phi}{2}\norm[0]{\zeta_c(t)}^2_{\Omega} 
      + C\int_0^{t}\tnorm{\boldsymbol{\zeta}_c(s)}_c^2\dif s 
      \leq  \frac{\phi}{2}\norm[0]{\zeta_c(0)}^2_{\Omega} 
      + \int_0^{t}\sum_{K\in \mathcal{T}}(\phi \partial_t \xi_c(s), \zeta_c(s))_K \dif s
      \\
      + \int_0^t B_h(u_h;\boldsymbol{\xi}_c(s), \boldsymbol{\zeta}_c(s)) \dif s
      - \int_0^t B_h(u_h;\boldsymbol{c}(s), \boldsymbol{\zeta}_c(s)) \dif s    
      + \int_0^{t}B_h(u;\boldsymbol{c}(s), \boldsymbol{\zeta}_c(s))\dif s \\
      - \int_0^{t} \tfrac12\sum_{K\in \mathcal{T}^d} (\nabla \cdot u_h, \zeta_c^2(s))_K \dif s 
      =: I_1+\hdots+I_6.
  \end{multline*}
  Consider first $I_1$. Observe that by \cref{eq:interp0}
  \begin{equation*}
    \norm[0]{\zeta_c(0)}_{\Omega}
    \leq \norm[0]{c_h(0)-c_0}_{\Omega} + \norm[0]{c_0-\mathcal{I} c_0}_{\Omega}
    \leq Ch^{k}\norm[0]{c_0}_{k,\Omega},    
  \end{equation*}
  since $c_h(0)$ is the $L^2$-projection of $c_0$ into
  $C_h$. Therefore
  $I_1 \leq C\frac{\phi}2h^{2(k)}\norm{c_0}^2_{k,\Omega}$.

  Using \cref{eq:interp0} and Cauchy--Schwarz and Young's
  inequalities,
  \begin{equation*}
    I_2 \le  \frac{C}2 h^{2k} \norm[0]{\partial_t c}_{L^2(0,t;H^{k}(\Omega))}^2 
    + \frac{C}2 \int_0^t\norm[0]{\zeta_c(s)}_{\Omega}^2\dif s.    
  \end{equation*}
  Next, by \cref{eq:continuity-1} and Young's inequality,
  \begin{equation*}
    I_3 
    \le C h^{2(k-1)}\norm{c}_{L^2(0,t;H^k(\Omega))}^2
    + \delta \int_0^t \tnorm{\boldsymbol{\zeta}_c(s)}_c^2 \dif s,     
  \end{equation*}
  with $\delta>0$ a constant. Similarly, using \cref{eq:continuity-2},
  \begin{equation*}
    I_4 + I_5
    \leq  Ch^{2(k-1)} \del[1]{ \norm{c}^2_{L^2(0,t;W^{1,\infty}(\Omega))} + \norm{c}^2_{L^2(0,t;H^k(\Omega))} }
    + \delta \int_0^{t} \tnorm{\boldsymbol{\zeta}_c(s)}_c^2 \dif s.
  \end{equation*}  
  For the final term, as in the proof of \Cref{lem:stability}, we have
  \begin{equation*}
    I_6\le \int_0^t C \norm[0]{f^d}_{0,\infty,\Omega^d}\norm[0]{\zeta_c(s)}_{\Omega}^2 \dif s.    
  \end{equation*}
  Choosing $\delta$ small enough and combining the above bounds, we
  obtain
  \begin{equation*}
    \begin{split}
      &\norm[0]{\zeta_c(t)}^2_{\Omega} + \int_0^{t} \tnorm{\boldsymbol{\zeta}_c(s)}_c^2 \dif s
      \\
      &\le C h^{2(k-1)}\del{
        \norm[0]{c_0}^2_{k,\Omega}       
        + \norm[0]{\partial_t c}_{L^2(0,t;H^{k}(\Omega))}^2 
        + \norm[0]{c}_{L^2(0,t;H^k(\Omega))}^2       
        + \norm[0]{c}_{L^2(0,t;W^{1,\infty}(\Omega))}^2}
      \\
      & \qquad +\int_0^t C\del[1]{ 1+ \norm[0]{f^d}_{0,\infty,\Omega^d}} \norm[0]{\zeta_c(s)}_{\Omega}^2 \dif s.      
    \end{split}
  \end{equation*}
  The result follows by \cref{lem:classicGronwall}.  
\end{proof}

A consequence of \cref{eq:interp0,eq:xi_cbound} and
\cref{lem:energynormerror} is the following error estimate for the
concentration in the energy norm.

\begin{corollary}
  \label{cor:energynormerror}
  Let $u$, $c$ and $c_0$ satisfy the assumptions of
  \cref{lem:energynormerror}. Then,
  \begin{equation*}
    \label{eq:energynorm-corollary}
    \norm[0]{c(t)-c_h(t)}^2_{\Omega}
    + \int_0^{t} \tnorm{\boldsymbol{c}(s) - \boldsymbol{c}_h(s)}_c^2 \dif s
    \le C h^{2(k-1)} \quad \forall t \in [0, T].
  \end{equation*}
\end{corollary}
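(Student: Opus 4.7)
The plan is to derive the corollary directly from the triangle inequality applied to the error splitting
\[
c - c_h = \xi_c - \zeta_c, \qquad \bar c - \bar c_h = \bar\xi_c - \bar\zeta_c,
\]
so that $\boldsymbol{c} - \boldsymbol{c}_h = \boldsymbol{\xi}_c - \boldsymbol{\zeta}_c$. The interpolation part is bounded by the approximation estimates \cref{eq:interp0,eq:xi_cbound} while the discrete-error part is supplied by \cref{lem:energynormerror}; no additional analysis is required.

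First I would bound the $L^2$ term. Using $\norm{c(t)-c_h(t)}_\Omega^2 \le 2\norm{\xi_c(t)}_\Omega^2 + 2\norm{\zeta_c(t)}_\Omega^2$, the first piece is controlled by \cref{eq:interp0} with $r=0$, $s=k$, giving $\norm{\xi_c(t)}_\Omega^2 \le C h^{2k}\norm{c(t)}_{k,\Omega}^2$, and the second piece is at most $C h^{2(k-1)}$ by \cref{lem:energynormerror}. Since $h^{2k} \le h^{2(k-1)}$ for $h \le 1$, both pieces are absorbed into $Ch^{2(k-1)}$.

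Next I would handle the energy-seminorm term. Again use the triangle inequality $\tnorm{\boldsymbol{c}(s)-\boldsymbol{c}_h(s)}_c^2 \le 2\tnorm{\boldsymbol{\xi}_c(s)}_c^2 + 2\tnorm{\boldsymbol{\zeta}_c(s)}_c^2$. Applying \cref{eq:xi_cbound} with $s=k$ yields
\[
\int_0^t \tnorm{\boldsymbol{\xi}_c(s)}_c^2 \, \mathrm{d}s \le C h^{2(k-1)} \int_0^t \norm{c(s)}_{k,\Omega}^2 \, \mathrm{d}s \le C h^{2(k-1)} \norm{c}_{L^2(0,t;H^k(\Omega))}^2,
\]
which is finite by the hypothesis $c \in L^2(0,T; H^k(\Omega))$. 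The $\boldsymbol{\zeta}_c$ contribution is $\le C h^{2(k-1)}$ directly from \cref{lem:energynormerror}.

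There is no real obstacle here; the only subtlety is ensuring the regularity hypotheses of \cref{lem:energynormerror} (namely $u$, $c$, $\partial_t c$, and $c_0$) are precisely those inherited from \cref{cor:energynormerror}, which they are by assumption. Adding the two bounds gives the stated estimate, with the generic constant $C$ absorbing norms of $c$, $c_0$, $\partial_t c$, and $f^d$.
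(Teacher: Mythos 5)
Your proposal is correct and matches the paper's intended argument: the paper states the corollary as an immediate consequence of the splitting $c - c_h = \xi_c - \zeta_c$, the interpolation estimates \cref{eq:interp0,eq:xi_cbound}, and \cref{lem:energynormerror}, which is exactly the triangle-inequality argument you carry out. The only detail worth noting is that $\tnorm{\boldsymbol{\xi}_c}_c$ reduces to $\bigl(\sum_K \norm{\nabla\xi_c}_K^2\bigr)^{1/2}$ because $\xi_c - \bar{\xi}_c$ vanishes on element boundaries, which is precisely what \cref{eq:xi_cbound} encodes and what your application of it relies on.
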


\subsection{Error estimate of the concentration in the $L^2$-norm}
To obtain the $L^2$-norm estimate for the concentration we consider
the following dual problem for $0 < t < T$ \cite{Dawson:2001}:
\begin{subequations}
  \begin{align}
    \label{eq:dual1}
    \phi\partial_t \sigma + u\cdot \nabla \sigma + \nabla \cdot [D(u) \nabla \sigma]& =\Psi & \quad & \mbox{ in }\Omega,
    \\
    \label{eq:dual2}
    D(u)\nabla \sigma\cdot n & = 0  & \quad & \mbox{ on }\partial \Omega,
    \\
    \label{eq:dual3}
    \sigma(\cdot, T)  & =0 & \quad & \mbox{ in }\Omega.
  \end{align}
  \label{eq:dual}
\end{subequations}
We will assume that $\sigma\in W^{2,\infty}(\Omega)$ and that
\begin{equation}
  \label{eq:regularity}
  \max_{0\leq t\leq T} \norm[0]{\sigma(\cdot,t)}^2_{\Omega}
  + \int_0^T \norm[0]{\sigma(t)}_{2,\Omega}^2 \dif t
  \leq C \norm[0]{\Psi}_{L^2(0,T;L^2(\Omega))}^2.
\end{equation}
For the analysis in this section we require $\Pi_C$ and $\bar{\Pi}_C$,
the $L^2$-projections into $C_h$ and $\bar{C}_h$, respectively. We
will use the following standard results of approximation theory
\cite[Chapter 3]{Ciarlet:book}:
\begin{align}
  \label{eq:L2projerror-1}
  \norm[0]{w-\Pi_Cw}_{r,K}  &\le Ch^{s-r}\norm[0]{w}_{s,K}, 
  && 0 \leq s\leq k, && r=0,1, 
  \\
  \label{eq:L2projerror-2}
  \norm[0]{\Pi_Cw-\bar{\Pi}_Cw}_{\partial K} &\le Ch^{s-1/2}\norm[0]{w}_{s,K}, 
  && 1\leq s\leq k,
\end{align}
for all $w \in H^k(\Omega)$ .
We will now prove an $L^2$-norm estimate for the concentration.

\begin{theorem}[Concentration error estimate in the $L^2$-norm.]
  \label{thm:l2errorestc}
  Suppose that $c$, $u$ and $u_h$ satisfy the assumptions of
  \Cref{lem:energynormerror} and $c_h$ is the solution to
  \cref{eq:semidiscrete}. Then
  \begin{equation*}
    \norm[0]{c-c_h}_{L^2(0,T;L^2(\Omega))} \leq C h^{k}.
  \end{equation*}
\end{theorem}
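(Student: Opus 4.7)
The plan is an Aubin--Nitsche duality argument based on the backward-in-time dual problem \eqref{eq:dual} with data $\Psi := c - c_h$, so that
\[
\|c - c_h\|_{L^2(0,T;L^2(\Omega))}^2 = \int_0^T (\Psi, c - c_h)_{\Omega}\, \dif t.
\]
I would first substitute the strong form \eqref{eq:dual1} of the dual equation on the right, integrate by parts once in time (using $\sigma(\cdot,T) = 0$) and element-by-element in space (using continuity of $\sigma$ across interior facets and the dual boundary condition \eqref{eq:dual2}), so that all facet, upwind and penalty contributions collapse. The result is an identity of the shape
\[
\|c - c_h\|_{L^2(0,T;L^2(\Omega))}^2
= \int_0^T \Bigl[ \sum_{K\in\mathcal{T}} (\phi\, \partial_t(c - c_h), \sigma)_K
+ B_h(u;\, \boldsymbol{c} - \boldsymbol{c}_h,\, \boldsymbol{\sigma}) \Bigr] \dif t
- \bigl(\phi(c_0 - c_h(0)),\, \sigma(\cdot,0)\bigr)_\Omega,
\]
with $\boldsymbol{\sigma} = (\sigma, \sigma|_{\Gamma_0})$.

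The next step is to couple this to the error equation obtained by subtracting \eqref{eq:consistency} from \eqref{eq:semidiscrete}:
\[
\sum_K (\phi\, \partial_t(c-c_h), w_h)_K + B_h(u_h;\, \boldsymbol{c} - \boldsymbol{c}_h,\, \boldsymbol{w}_h)
= B_h(u_h;\, \boldsymbol{c},\, \boldsymbol{w}_h) - B_h(u;\, \boldsymbol{c},\, \boldsymbol{w}_h),
\quad \forall\, \boldsymbol{w}_h \in \boldsymbol{C}_h.
\]
Choose $\boldsymbol{w}_h = \boldsymbol{\sigma}_I := (\mathcal{I}\sigma, \bar{\mathcal{I}}\sigma)$, the continuous interpolant of $\sigma$ in $\boldsymbol{C}_h$, integrate in time, and subtract from the duality identity. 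The remaining terms fall into four groups: (i) a time-projection residual $(\phi\,\partial_t(c-c_h), \sigma - \mathcal{I}\sigma)$; (ii) an $h$-consistency residual $B_h(u;\, \boldsymbol{c}-\boldsymbol{c}_h,\, \boldsymbol{\sigma} - \boldsymbol{\sigma}_I)$; (iii) two velocity-perturbation terms of the type $[B_h(u_h;\,\cdot\,,\boldsymbol{\sigma}_I) - B_h(u;\,\cdot\,,\boldsymbol{\sigma}_I)]$; and (iv) the initial-data contribution.

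Each piece is then bounded. Group (i) is controlled by Cauchy--Schwarz and the projection error \eqref{eq:L2projerror-1} applied to $\sigma$. Group (ii) is handled as in the proofs of \Cref{lem:contresult1,lem:contresult2}, combining the approximation properties of $\sigma$ with the energy-norm bound $\int_0^T \tnorm{\boldsymbol{c}-\boldsymbol{c}_h}_c^2\,\dif s \leq Ch^{2(k-1)}$ from \Cref{cor:energynormerror}. Group (iii) is bounded exactly as in \Cref{lem:contresult2}, giving an $h^{k+1}$ contribution. Group (iv) reduces, by $L^2$-orthogonality of $c_0 - c_h(0) = c_0 - \Pi_C c_0$ against $\Pi_C\sigma(\cdot,0)$, to $(c_0 - \Pi_C c_0,\, \sigma(\cdot,0) - \Pi_C\sigma(\cdot,0))$, which is $O(h^{k+2})$ by \eqref{eq:L2projerror-1}. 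Absorbing all $\sigma$-norms via the dual regularity estimate \eqref{eq:regularity} produces a factor of $\|\Psi\|_{L^2(0,T;L^2(\Omega))} = \|c-c_h\|_{L^2(0,T;L^2(\Omega))}$ on the right, leading to
\[
\|c - c_h\|_{L^2(0,T;L^2(\Omega))}^2 \leq C h^{k}\, \|c - c_h\|_{L^2(0,T;L^2(\Omega))},
\]
and dividing gives the claim.

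The main obstacle is the adjoint-consistency step: verifying that, after element-wise integration by parts of the continuous dual operator against a smooth, trace-continuous $\sigma$, every facet contribution in the DG form $B_h$ (upwind numerical flux, diffusive lifting, symmetrization, interior penalty, hybrid trace) cancels or recombines into the continuous expression without leaving residual cross-terms. A secondary difficulty is tracking $D(u)$ versus $D(u_h)$ throughout the diffusive parts, which has to be controlled using \eqref{eq:DLipschitz} together with the velocity error \eqref{eq:L2error-u} and the $L^\infty$-bound \eqref{eq:uhbound-Linf}.
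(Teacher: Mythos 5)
Your overall strategy --- duality with the backward problem \cref{eq:dual}, data $\Psi = c - c_h$, coupling the resulting identity to the error equation tested with a discrete approximation of $\sigma$, and absorbing everything via \cref{eq:regularity} --- is exactly the paper's strategy. The difference that matters is your choice of test function. The paper tests with the $L^2$-projections $(\Pi_C\sigma,\bar{\Pi}_C\sigma)$; you test with the continuous interpolant $\boldsymbol{\sigma}_I=(\mathcal{I}\sigma,\bar{\mathcal{I}}\sigma)$. That choice breaks your group (i). As written, group (i) is $\int_0^T\sum_K(\phi\,\partial_t(c-c_h),\sigma-\mathcal{I}\sigma)_K\,\dif t$, and you propose to bound it by ``Cauchy--Schwarz and the projection error applied to $\sigma$.'' Cauchy--Schwarz leaves the factor $\norm{\partial_t(c-c_h)}_{L^2(0,T;L^2(\Omega))}$, and no estimate for the time derivative of the error is available (the energy estimate of \cref{lem:energynormerror} controls $\zeta_c$ and $\tnorm{\boldsymbol{\zeta}_c}_c$, not $\partial_t\zeta_c$); integrating by parts in time instead would require control of $\partial_t\sigma$, which \cref{eq:regularity} does not supply. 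The device that closes this term is precisely the $L^2$-orthogonality you forfeited: with $\Pi_C\sigma$ as test function, $(\phi\,\partial_t c_h,\sigma-\Pi_C\sigma)_K=0$ and $(\phi\,\partial_t\Pi_C c,\sigma-\Pi_C\sigma)_K=0$ (since $\phi$ is elementwise constant), so the term collapses to $(\phi\,\partial_t(c-\Pi_C c),\sigma-\Pi_C\sigma)_K$, which is $O(h^{k})$ by \cref{eq:L2projerror-1} applied to both factors. You do invoke exactly this orthogonality for your group (iv), so the fix is available to you; but it forces the projection, not the interpolant, as the test function, and then all the facet terms in $\Pi_C\sigma-\bar{\Pi}_C\sigma$ that your interpolant conveniently annihilated reappear and must be estimated (these are the terms $T_9$--$T_{14}$ in the paper, handled with \cref{eq:L2projerror-2} and the trace/inverse inequalities).

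A secondary inaccuracy is in group (iii). You claim the velocity-perturbation terms are ``bounded exactly as in \Cref{lem:contresult2}, giving an $h^{k+1}$ contribution,'' but the conclusion of \cref{lem:contresult2} is $Ch^{k-1}(\cdot)\tnorm{\boldsymbol{w}_h}_c$, and with $\tnorm{\boldsymbol{\sigma}_I}_c\leq C\norm{\sigma}_{2,\Omega}$ that black-box bound only yields $O(h^{k-1})\norm{\Psi}$, i.e.\ the suboptimal rate you already have from \cref{cor:energynormerror}. To get $O(h^{k})$ or better you must exploit the structure of the test function (the $h^{k-1}$ loss in \cref{lem:contresult2} comes entirely from the facet terms $H_{22},H_{23}$, which vanish for the continuous interpolant but not for the projection), and the perturbation term with first argument $\boldsymbol{c}-\boldsymbol{c}_h$ is not covered by \cref{lem:contresult2} at all --- it involves products such as $(c-c_h)(u-u_h)$ and $(D(u)-D(u_h))\nabla c_h$ that the paper controls separately (its $T_6$, $T_7$, $T_{12}$) using the inverse inequality \cref{eq:inverse}, \cref{lem:stability}, and the restriction $k>1$ so that $h^{k+1-\dim/2}$ does not degrade the rate. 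Neither issue is unfixable, but as proposed the argument does not close.
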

\begin{proof}
  Set $\Psi = c-c_h$ in \cref{eq:dual}. We multiply \cref{eq:dual1} by $c-c_h$,
  integrate over $K \in \mathcal{T}$, integrate by parts the diffusion
  term over an element $K \in \mathcal{T}$, sum over the elements, and
  integrate from 0 to $T$. Then, integrating by parts in time and
  using \cref{eq:dual3},
  \begin{multline}
    \label{eq:L2error-1}
    -\int_0^T \sum_{K\in \mathcal{T}}(\phi  \sigma, \partial_t(c-c_h))_K 
    +\int_0^T\sum_{K\in \mathcal{T}}(\nabla \sigma,u(c-c_h))_K\dif t
    \\
    -\int_0^T\sum_{K\in \mathcal{T}}(D(u)\nabla \sigma, \nabla (c-c_h))_K\dif t 
    +\int_0^T\sum_{K\in \mathcal{T}}\langle D(u)\nabla \sigma\cdot n, c-c_h\rangle_{\partial K}\dif t
    \\
    - \sum_{K\in \mathcal{T}}(\phi  \sigma(0), c_0-c_h(0))_K 
    =  \norm[0]{c-c_h}_{L^2(0,T;L^2(\Omega))}^2.
  \end{multline}
  Subtracting \cref{eq:semidiscrete} from \cref{eq:consistency},
  choosing
  $\boldsymbol{w}_h = \boldsymbol{\Pi}_C\sigma = (\Pi_C \sigma,
  \bar{\Pi}_C \sigma)$ and integrating with respect to time,
  \begin{equation}
    \label{eq:L2error-2}
    \int_0^T\sum_{K\in \mathcal{T}} \cbr{
      (\phi\, \partial_t (c-c_h)\,, \Pi_C \sigma)_K
      + \sbr{B_h(u; \boldsymbol{c}, \boldsymbol{\Pi}_C\sigma)
      - B_h(u_h; \boldsymbol{c}_h, \boldsymbol{\Pi}_C\sigma)} } \dif t
      = 0.    
  \end{equation}
  Adding \cref{eq:L2error-2,eq:L2error-1}, and manipulating
  the integrals, we obtain
  \begin{equation*}
    \begin{split}
       & \norm{c-c_h}_{L^2(0,T;L^2(\Omega))}^2 
      = -\int_0^T\sum_{K\in \mathcal{T}} (\phi\, \partial_t (c-c_h)\,, \sigma-\Pi_C\sigma)_K \dif t
      \\
      & + \int_0^T\sum_{K\in \mathcal{T}}(\nabla (\sigma-\Pi_C\sigma),u(c-c_h))_K\dif t
      -\int_0^T\sum_{K\in \mathcal{T}}(D(u)\nabla (\sigma-\Pi_C\sigma), \nabla (c-c_h))_K\dif t
      \\
      & + \int_0^T\sum_{K\in \mathcal{T}}\langle [D(u)\nabla \sigma]\cdot n, c-c_h\rangle_{\partial K}\dif t
      + \int_0^T \sum_{K\in \mathcal{T}} \langle [D(u_h)\nabla \Pi_C \sigma]\cdot n, c_h-\bar{c}_h\rangle_{\partial K} \dif t
      \\
      & - \int_0^T \sum_{K\in \mathcal{T}} (c_h(u-u_h),\nabla \Pi_C \sigma)_K \dif t
      + \int_0^T \sum_{K\in \mathcal{T}} (\nabla c_h, [D(u)-D(u_h)]\nabla \Pi_C \sigma)_K \dif t
      \\
      & - \sum_{K\in \mathcal{T}}(\phi  \sigma(0), c_0-c_h(0))_K 
      + \int_0^T\sum_{K\in \mathcal{T}} \langle (c-c_h)u\cdot n, \Pi_C\sigma-\bar{\Pi}_C\sigma\rangle_{\partial K}\dif t
      \\
      & + \int_0^T\sum_{K\in \mathcal{T}} \langle c_h(u-u_h)\cdot n, \Pi_C\sigma-\bar{\Pi}_C\sigma \rangle_{\partial K} \dif t
      \\
      & - \int_0^T\sum_{K\in \mathcal{T}} \langle [D(u)\nabla (c-c_h)]\cdot n, \Pi_C\sigma-\bar{\Pi}_C\sigma\rangle_{\partial K} \dif t
      \\
      & - \int_0^T\sum_{K\in \mathcal{T}} \langle [(D(u)-D(u_h))\nabla c_h]\cdot n,\Pi_C\sigma-\bar{\Pi}_C\sigma\rangle_{\partial K} \dif t
      \\
      & + \int_0^T\sum_{K\in \mathcal{T}} \langle u_h\cdot n(c_h-\bar{c}_h), \Pi_C\sigma-\bar{\Pi}_C\sigma \rangle_{\partial K^{\rm in}} \dif t
      \\
      & - \int_0^T\sum_{K\in \mathcal{T}} \frac{\beta_c}{h_K}\langle [D(u_h)n](c_h-\bar{c}_h), (\Pi_C\sigma-\bar{\Pi}_C\sigma)n \rangle_{\partial K} \dif t
      \\
      =:& T_1+\hdots + T_{14}.      
    \end{split}
  \end{equation*}
  Observe that since $c_h, \Pi_C c\in C_h$,
  $(\phi\, \partial_t c_h\,, \sigma-\Pi_C\sigma)_K = 0$ and
  $(\phi\, \partial_t \Pi_Cc\,, \sigma-\Pi_C\sigma)_K = 0$. Then, by
  \cref{eq:L2projerror-1}, the Cauchy--Schwarz inequality,
  \cref{eq:regularity}, and Young's inequality,
  \begin{equation*}
    \begin{split}
      T_1
      &= -\int_0^T\sum_{K\in \mathcal{T}} (\phi\, \partial_t (c-\Pi_Cc)\,, \sigma-\Pi_C\sigma)_K \dif t
      \\
      &\leq \int_0^T\sum_{K\in \mathcal{T}} C\phi h^{k}\norm{\partial_t c}_{k-2, K} \norm{\sigma}_{2, K}\dif t
      \\
      & \leq C\phi h^k\norm{\partial_t c}_{L^2(0,T;H^{k-2}(\Omega))} \del{ \int_0^T \norm{\sigma(t)}_{2,\Omega}^2 \dif t }^{1/2}
      \\
      & \leq Ch^{2k}\norm{\partial_t c}^2_{L^2(0,T;H^{k-2}(\Omega))} + \delta \norm{c-c_h}_{L^2(0,T;L^2(\Omega))}^2,
    \end{split}
  \end{equation*}
  where $\delta > 0$.

  By \cref{cor:energynormerror}, \cref{eq:regularity,eq:L2projerror-1},
  \begin{equation*}
    \begin{split}
      T_2
      &\leq \norm{u}_{0,\infty, \Omega} \del[2]{\int_0^T \norm{c-c_h}_{\Omega}\dif t}^{1/2} \del[2]{\int_0^T \norm{\nabla (\sigma-\Pi_C\sigma)}^2_{\Omega}\dif t}^{1/2}
      \\
      & \leq\norm{u}_{0,\infty, \Omega}\del[2]{\int_0^T Ch^{2(k-1)}\dif t}^{1/2} h\del[2]{\int_0^T \norm{\sigma}^2_{2,\Omega}\dif t}^{1/2}
      \\
      & \leq Ch^{k} T^{1/2}\norm{u}_{0,\infty, \Omega}\norm{c-c_h}_{L^2(0,T;L^2(\Omega))}
      \\
      & \leq \delta \norm{c-c_h}_{L^2(0,T;L^2(\Omega))}^2 + CT \norm{u}_{0,\infty, \Omega}^2h^{2k}.
    \end{split}
  \end{equation*}
  Following similar steps as in the bound for $T_2$, using
  \cref{eq:D_max} and \cref{cor:energynormerror}, we find that
  \begin{equation*}
    T_3 \le \delta \norm{c-c_h}_{L^2(0,T;L^2(\Omega))}^2 + CD_{max}^2h^{2k}.
  \end{equation*}
  Next, observe that by smoothness of $u$, $c$ and $\sigma$,
  \cref{eq:dual2} and single valuedness of $\bar{c}_h$, we can write
  \begin{equation*}
    \sum_{K\in \mathcal{T}} \langle D(u)\nabla \sigma\cdot n, c-c_h\rangle_{\partial K}
    = \sum_{K\in \mathcal{T}}\langle D(u)\nabla \sigma\cdot n, \bar{c}_h-c_h\rangle_{\partial K}.
  \end{equation*}
  Therefore, 
  \begin{multline*}
    T_4+T_5
    =
    \int_0^T\sum_{K\in \mathcal{T}}\langle D(u)\nabla (\sigma - \Pi_C \sigma) \cdot n, \bar{c}_h-c_h\rangle_{\partial K}\dif t
    \\
    + \int_0^T\sum_{K\in \mathcal{T}} \langle [(D(u_h)-D(u))\nabla \Pi_C \sigma]\cdot n, c_h-\bar{c}_h\rangle_{\partial K} \dif t
    = T_{451}+T_{452}.
  \end{multline*}
  Since $\bar{c}_h-c_h = (c-c_h) - (\bar{c}-\bar{c}_h)$ on
  $\partial K$, using
  \cref{eq:2-trace-continuous,eq:Dboundary,eq:L2projerror-1},
  \begin{equation*}
    \begin{split}
      T_{451}
      & \leq C (1+\|u\|_{0,\infty,\Omega}) \int_0^T\del[2]{\sum_{K\in \mathcal{T}}
      h_K\norm{\nabla (\sigma-\Pi_C\sigma)}_{\partial K}^2}^{1/2}\del[2]{\sum_{K\in \mathcal{T}}h_K^{-1}\norm{\bar{c}_h-c_h}^2_{\partial K}}^{1/2}\dif t
      \\
      &\leq Ch(1+\|u\|_{0,\infty,\Omega}) \del[2]{\int_0^T\norm{\sigma}_{2, \Omega}^2 \dif t}^{1/2}
      \del[2]{\int_0^T  \tnorm{\boldsymbol{c}-\boldsymbol{c}_h}_c^2 \dif t}^{1/2}.
    \end{split}
  \end{equation*}
  Next, apply \cref{eq:DLipschitz}, the trace inequality
  \cref{eq:inf-trace-continuous}, \cref{eq:bounduhuboundary}, the
  inverse inequality \cref{eq:inverse}, and \cref{eq:L2projerror-1},  
  \begin{equation*}
    \begin{split}
      T_{452}
      & \leq Ch^{k+1/2}
      \int_0^T \sum_{K\in \mathcal{T}}\norm{\nabla \Pi_C \sigma}_{0,\infty, K} \norm{c_h-\bar{c}_h}_{\partial K}\dif t
      \\
      & \leq C h^{k+1 - \dim/2}
      \int_0^T\sum_{K\in \mathcal{T}}\norm{\nabla \Pi_C \sigma}_{K}h_K^{-1/2}\norm{c_h-\bar{c}_h}_{\partial K}\dif t
      \\
      &\leq Ch\del[2]{\int_0^T|\sigma|_{1,\Omega}^2 \dif t}^{1/2}\del[2]{\int_0^T \tnorm{\boldsymbol{c}-\boldsymbol{c}_h}_c^2\dif t}^{1/2}.
    \end{split}
  \end{equation*}
  Therefore, by \cref{eq:regularity} and \cref{cor:energynormerror},
  \begin{equation*}
      T_4+T_5
      \leq Ch^k \norm{c-c_h}^2_{L^2(0,T;L^2(\Omega))}
      \leq \delta \norm{c-c_h}^2_{L^2(0,T;L^2(\Omega))} + Ch^{2k}.
  \end{equation*}
  To bound $T_6$, note that
  \begin{equation*}
    \begin{split}
      T_6 &= \int_0^T \sum_{K\in \mathcal{T}} ((c-c_h)(u-u_h),\nabla \Pi_C \sigma)_K \dif t
      - \int_0^T \sum_{K\in \mathcal{T}} (c(u-u_h),\nabla \Pi_C \sigma)_K \dif t
      \\
      & =: T_{61} + T_{62}.      
    \end{split}
  \end{equation*}
  Using the inverse inequality \cref{eq:inverse} and
  \cref{eq:L2error-u},
  \begin{equation*}
    \begin{split}
      T_{61}
      & \leq  \int_0^T \sum_{K\in \mathcal{T}} \norm{c_h-c}_K \norm{u-u_h}_K
      \norm{\nabla \Pi_C \sigma}_{0,\infty, K} \dif t
      \\
      & \leq C h^{k+1-{\rm \dim}/2}\int_0^T \norm{c_h-c}_{\Omega} \norm{\nabla \sigma}_{\Omega} \dif t
      \leq C h^{k}T^{1/2}\del[2]{\int_0^T  \norm{\nabla \sigma}^2_{\Omega} \dif t}^{1/2},
    \end{split}
  \end{equation*}
  where we used \cref{cor:energynormerror} and the assumption that
  $k > 1$.  Therefore, by \cref{eq:regularity} and Young's inequality,
  \begin{equation*}
      T_{61}  \leq C h^k T^{1/2} \norm{c - c_h}_{L^2(0,T;L^2(\Omega))}
      \le C h^{2k} + \delta\norm{c - c_h}^2_{L^2(0,T;L^2(\Omega))}.
  \end{equation*}
  By \cref{eq:L2projerror-1}, H\"older's inequality and
  \cref{eq:L2error-u},
  \begin{equation*}
    \begin{split}
      T_{62}
      &
      \leq C\norm{u-u_h}_{\Omega}\int_0^T \norm{c}_{0,\infty, \Omega}  \norm{\sigma}_{1,\Omega} \dif t
      \\
      &
      \leq Ch^{k+1}\norm{c}_{L^2(0,T;L^{\infty}(\Omega))}\del[2]{\int_0^T\norm{\sigma}_{2,\Omega}^2 \dif t}^{1/2}
      \leq  C h^{2k+2} + \delta \norm{c-c_h}^2_{L^2(0,T;L^2(\Omega))},
      \end{split}
  \end{equation*}
  where we applied \cref{eq:regularity}, and Young's
  inequality.  Therefore,
  \begin{equation*}
    T_6\leq C h^{2k} + \delta \norm{c-c_h}^2_{L^2(0,T;L^2(\Omega))}.
  \end{equation*}
  Similarly, this time using \cref{eq:DLipschitz},
  $T_7 \leq Ch^{2k} + \delta \norm{c-c_h}^2_{L^2(0,T;L^2(\Omega))}$.
  By \cref{eq:regularity}, Young's inequality and since $c_h(0)$ is
  the $L^2$-projection of $c_0$,
  \begin{equation*}
    T_8
    \leq \phi \norm[0]{\sigma(0)}_{\Omega} \norm[0]{c_0-c_h(0)}_{\Omega}
    \leq \delta \norm[0]{c-c_h}_{L^2(0,T;L^2(\Omega))}^2 + Ch^{2k} \norm[0]{c_0}^2_{k,\Omega}.    
  \end{equation*}
  Using \cref{eq:L2projerror-2}, \cref{eq:2-trace-continuous},
  \cref{eq:inf-trace-continuous}, \cref{cor:energynormerror}, and
  \cref{eq:regularity},
  \begin{equation*}
    \begin{split}
      T_9
      & \leq \norm{u}_{0,\infty, \Omega}\del[2]{\int_0^T  \sum_{K\in \mathcal{T}}\norm{c-c_h}_{\partial K}^2}^{1/2}\del[2]{\int_0^T  \sum_{K\in \mathcal{T}}\norm{\Pi_C\sigma - \bar{\Pi}_C\sigma}_{\partial K}^2}^{1/2}
      \\
      & \leq Ch^{3/2}\norm{u}_{0,\infty, \Omega}\del[2]{\int_0^T \sum_{K\in \mathcal{T}}\del[1]{h_K^{-1}\norm{c-c_h}_{K}^2 + h_K|c-c_h|_{1,K}^2}}^{1/2}
       \del[2]{\int_0^T\norm{\sigma}_{2,\Omega}^2 \dif t}^{1/2}
      \\
      & \leq Ch^{3/2}\norm{u}_{0,\infty, \Omega}  \del[2]{ T^{1/2} h^{k-3/2} + h^{k-1/2}}
       \del[2]{\int_0^T\norm{\sigma}_{2,\Omega}^2 \dif t}^{1/2}
      \\
      & \leq  Ch^{2k} + \delta \norm{c-c_h}^2_{L^2(0,T;L^2(\Omega))}.
    \end{split}
  \end{equation*}
  By \cref{eq:inf-trace-continuous}, \cref{eq:bounduhuboundary},
  \cref{eq:inverse}, \cref{eq:L2projerror-2}, H\"older's inequality,
  \cref{lem:stability}, that $k+2-{\rm dim}/2 \ge k $ and
  \cref{eq:regularity},
  \begin{equation*}
    \begin{split}
      T_{10}
      &\leq Ch^{k+1/2} \int_0^T\sum_{K\in \mathcal{T}} \norm{c_h}_{0,\infty, K}  \norm{\Pi_C\sigma-\bar{\Pi}_C\sigma}_{\partial K} \dif t
      \\
      &\leq Ch^{k+2-\rm{\dim}/2} \int_0^T\norm{c_h}_{\Omega}\norm{\sigma}_{2,\Omega} \dif t
      \\
      &\leq C h^{k+2-{\rm dim}/2} \max_{0\leq t\leq T} \norm{c_h}_{\Omega} T^{1/2}\|c-c_h\|_{L^2(0,T;L^2(\Omega))}\\
      &\le C h^{2k} + \delta \norm{c-c_h}^2_{L^2(0,T;L^2(\Omega))}.
    \end{split}
  \end{equation*}
  Using \cref{eq:Dboundary} and \cref{eq:L2projerror-2},
  \begin{equation*}
    T_{11}
    \leq C(1+\|u\|_{0,\infty,\Omega}) \int_0^T\sum_{K\in \mathcal{T}}
    \del{\norm{\nabla\xi_c}_{\partial K} + \norm{\nabla\zeta_c}_{\partial K}}
    h_K^{3/2}\norm{\sigma}_{2,K} \dif t.
  \end{equation*}
  By the trace inequalities \cref{eq:2-trace-continuous,eq:2-trace},
  the interpolation estimate \cref{eq:interp0}, and the
  Cauchy--Schwarz and Young's inequalities,
  \begin{equation*}
    \begin{split}
      T_{11}
      &\leq C(1+\|u\|_{0,\infty,\Omega})
      \sbr[2]{h^{k+1/2} \norm{c}_{L^2(0,T;H^k(\Omega))} + h \del[2]{\int_0^T\tnorm{\zeta_c}_c^2\dif t}^{1/2}}
      \del[2]{\int_0^T\norm{\sigma}_{2,\Omega}\dif t}^{1/2}
      \\
      & \leq Ch^{2k} + \delta \norm{c-c_h}^2_{L^2(0,T;L^2(\Omega))}.
    \end{split}
  \end{equation*}
  By \cref{eq:DLipschitz}, \cref{eq:bounduhuboundary},
  \cref{eq:L2projerror-2}, \cref{eq:inf-trace-continuous},
  \cref{eq:inverse}, \cref{lem:stability}
  and
  \cref{eq:regularity},
  \begin{multline*}
    T_{12}
    \leq C\int_0^T\sum_{K\in \mathcal{T}} \norm{u-u_h}_{\partial K}
    \norm{\nabla c_h}_{0,\infty, K}h_K^{3/2}\norm{\sigma}_{2, K} \dif t
    \\
    \leq  Ch^{k+2-\dim/2}\int_0^T
    \norm{\nabla c_h}_{\Omega} \norm{\sigma}_{2, \Omega} \dif t
    \leq  Ch^{2k} + \delta \norm{c-c_h}^2_{L^2(0,T;L^2(\Omega))}.
  \end{multline*}
  Using \cref{eq:inf-trace-continuous}, \cref{eq:L2projerror-2},
  \cref{cor:energynormerror}, \cref{eq:regularity} and
  \cref{eq:uhbound-Linf},
  \begin{align*}
    T_{13}
    & \leq C\|u_h\|_{0,\infty,\Omega} \int_0^T\sum_{K\in \mathcal{T}}\|c_h-c-(\bar{c}_h-\bar{c})\|_{\partial K}
      h_K^{3/2}\norm{\sigma}_{2,K} \dif t
    \\
    & \leq Ch^2\|u_h\|_{0,\infty,\Omega} \del[2]{\int_0^T \tnorm{{\bf c}-{\bf c}_h}_c^2\dif t}^{1/2}\del[2]{\int_0^T \|\sigma\|^2_{2,\Omega}\dif t}^{1/2}
    \\
    &\leq Ch^{2(k+1)} + \delta \norm{c-c_h}^2_{L^2(0,T;L^2(\Omega))}.
  \end{align*}
  Finally we consider $T_{14}$.  By \cref{eq:D_max},
  \cref{eq:inf-trace-continuous}, \cref{eq:L2projerror-2},
  \cref{cor:energynormerror}, \cref{eq:regularity}
  \begin{align*}
    T_{14}
    &\leq C\int_0^T \sum_{K\in\mathcal{T}} \frac{\beta_c}{h_K}\|D(u_h)\|_{0,\infty,\partial K}\|c_h-\bar{c}_h\|_{\partial K}h_K^{3/2}
      \|\sigma\|_{2,K} \dif t
    \\
    &\leq C\beta_c h\|D(u_h)\|_{0,\infty, \Omega}\int_0^T \sum_{K\in\mathcal{T}} h_K^{-1/2}\|c_h-\bar{c}_h\|_{\partial K}
      \|\sigma\|_{2,K} \dif t
    \\
    & \leq C\beta_c D_{{\rm max}} h \del[2]{\int_0^T \tnorm{c-c_h}_c^2\dif t}^{1/2}\del[2]{\|\sigma\|^2_{2,\Omega}\dif t}^{1/2}
    \\
    & \leq Ch^{2k} + \delta\norm{c-c_h}^2_{L^2(0,T;L^2(\Omega))}.
  \end{align*}
  Therefore, combining all bounds and picking $\delta>0$ small enough
  the result follows.  
\end{proof}

\Cref{thm:l2errorestc} shows optimal convergence for the concentration
in the $L^2$-norm for sufficiently smooth $c$, $c_0$, and $u$.

\section{Numerical Experiments}
\label{sec:numexp}
In this section we demonstrate performance of the method. We will
first verify \cref{thm:l2errorestc}, i.e., that the concentration
converges optimally in the $L^2$-norm, after which we verify
compatibility of the coupled discretization. As final test case we
consider a more realistic test case of contaminant transport and
compare our results to those obtained in literature.

All simulations have been implemented in the finite element library
NGSolve \cite{Schoberl:2014}. We use unstructured simplicial meshes to
discretize the domain $\Omega$ and use Crank--Nicolson time
stepping. Furthermore, the penalty parameters in
\cref{eq:hdgwf,eq:semidiscrete} are set to $\beta_f = 10k^2$ and
$\beta_c = 6\ell^2$.

\subsection{Rates of convergence in the $L^2$-norm}
\label{ss:rates_convergence_transport}
Let $\Omega=[0,1]^2$ with $\Omega^d = [0,1] \times [0, 0.5]$ and
$\Omega^s = [0,1] \times [0.5, 1]$. The source terms and boundary
conditions for the Stokes--Darcy problem \cref{eq:system,eq:interface}
are chosen such that the exact solution is given by
\begin{equation}
  \begin{aligned}
    u|_{\Omega_S}
    &=
    \begin{bmatrix}
      -\sin(\pi x_1)\exp(x_2/2)/(2\pi^2)
      \\
      \cos(\pi x_1)\exp(x_2/2)/\pi
    \end{bmatrix},
    &
    p|_{\Omega_S} &= \frac{\kappa\mu-2}{\kappa\pi}\cos(\pi x_1)\exp(x_2/2),
    \\
    u|_{\Omega_D}
    &=
    \begin{bmatrix}
      -2\sin(\pi x_1)\exp(x_2/2) 
      \\
      \cos(\pi x_1)\exp(x_2/2)/\pi
    \end{bmatrix},
    &
    p|_{\Omega_D} &= -\frac{2}{\kappa\pi}\cos(\pi x_1)\exp(x_2/2),
  \end{aligned}  
  \label{eq:exactsolution}  
\end{equation}
with $\alpha = \mu\kappa^{1/2}(1 + 4\pi^2)/2$ as considered also
in \cite{Cesmelioglu:2020, Correa:2009}. We take $\mu=1$ and
$\kappa=1$.

We solve the Stokes--Darcy problem using the EDG-HDG discretization
\cref{eq:hdgwf}. We then replace the exact velocity $u$ in the
transport problem \cref{eq:transport_system} by $u_h$, i.e., the
discrete velocity solution to \cref{eq:hdgwf}. Other parameters in
\cref{eq:transport_system} are set as: $\phi = 1$ and
\begin{equation}
  \label{eq:diffusiontensor}
  D =
  \begin{bmatrix}
    0.01 & 0.005 \\ 0.005 & 0.02
  \end{bmatrix}.
\end{equation}
We set the source and boundary terms such that the exact solution to
\cref{eq:transport_system} is given by
$c(x,t) = \sin(2\pi(x_1-t))\cos(2\pi(x_2-t))$.

We compute the rates of convergence of the contaminant $c$ for
$\ell = k-1 = 1$ and $\ell = k-1 = 2$ with the time step small enough
so that spatial errors dominate over temporal errors. The error in
the $L^2$-norm and rates of convergence at final time $t=1$ are
presented in \cref{tab:rates_conv_transport}. We obtain optimal rates
of convergence for $c_h$, verifying \cref{thm:l2errorestc}.
\begin{table}
  \centering {
    \begin{tabular}{ccc|ccc}
      \hline
      \multicolumn{3}{c}{$\ell=1$} & \multicolumn{3}{|c}{$\ell=2$} \\
      Elements & $\norm{c - c_h}_{\Omega}$ & Rate & Elements & $\norm{c - c_h}_{\Omega}$ & Rate  \\
      \hline
      \num{28}    & 2.2e-1 &   - & \num{8}    & 3.6e-1 &   - \\
      \num{152}   & 3.1e-2 & 2.8 & \num{28}   & 4.7e-2 & 3.0 \\
      \num{578}   & 8.5e-3 & 1.9 & \num{152}  & 3.2e-3 & 3.9 \\
      \num{2416}  & 2.0e-3 & 2.1 & \num{578}  & 3.3e-4 & 3.3 \\
      \num{9584}  & 4.5e-4 & 2.1 & \num{2416} & 3.3e-5 & 3.3 \\
      \hline
    \end{tabular}
  }
  \caption{Errors and rates of convergence in $\Omega$ for the
    solution $c_h$ of the EDG discretization of the transport equation
    \cref{eq:semidiscrete} using an approximate velocity $u_h$
    computed by the EDG-HDG method for the Stokes--Darcy system
    \cref{eq:hdgwf}. The test case is described
    in~\cref{ss:rates_convergence_transport}. }
  \label{tab:rates_conv_transport}
\end{table}

\subsection{Compatibility of the Stokes--Darcy and transport discretization}
\label{ss:testing_compatibility_hdg}
To verify compatibility of the Stokes--Darcy and transport
discretization \cref{eq:hdgwf,eq:semidiscrete}, we need to verify that
\cref{eq:semidiscrete} is able to preserve the constant solution when
$f = -\chi^df^d\tilde{c}$ where $\tilde{c}$ is a constant.

For this test case we first solve the discrete Stokes--Darcy flow
problem \cref{eq:hdgwf}. We use the same setup as in
\cref{ss:rates_convergence_transport} and compute the discrete
velocity $u_h$ and discrete pressure $p_h$ solutions on a grid
consisting of 578 elements and using $k=2$.

We then solve the discrete transport problem
\cref{eq:semidiscrete}. We set $u = u_h$, $\phi = 1$, $\ell = k-1 = 1$
and use the diffusion tensor given by \cref{eq:diffusiontensor} and
time step $\Delta t = 10^{-3}$. We choose the initial and boundary
conditions such that the exact solution is given by
$c = \tilde{c} = 1$. At final time $t=1$ we compute
$\norm{1 - c_h}_{\Omega} = 1.5\cdot 10^{-13}$, verifying compatibility
of the discretization \cref{eq:hdgwf,eq:semidiscrete}.

To compare, we now consider a discretization of the Stokes--Darcy
problem that is \emph{not} compatible with the EDG discretization
\cref{eq:semidiscrete} of the transport problem. The setup is the same
as above except that we discretize the Stokes--Darcy problem by an
embedded discontinuous Galerkin method on $\Omega^s$ for the Stokes
equations \cite{Rhebergen:2020}, and a standard L-HDG method on
$\Omega^d$ for the Darcy equations \cite{Cockburn:2009a}. We remark
that this discretization of the Stokes--Darcy problem is not exactly
mass conserving, i.e., the discrete velocity solution $u_h$ does not
satisfy the properties described in \cref{eq:uh_properties}. As a
result, this discretization of the Stokes--Darcy problem cannot be
proven to be compatible with the EDG discretization
\cref{eq:semidiscrete} of the transport problem. Indeed, computing
the error in the concentration at final time $t=1$ we find that
$\norm{1 - c_h}_{\Omega} = 2.4\cdot 10^{-4}$, i.e., this incompatible
discretization is not able to preserve the constant solution.

In \cref{fig:transport} we compare the solution of the concentration
at final time $t$ found using the compatible EDG-HDG discretization
with the solution found using the incompatible discretization. It is
clear that the incompatible discretization does not preserve the
constant $c=1$.

\begin{figure}
  \centering
  \subfloat[Compatible flow-transport discretization. \label{fig:transport_a}]
  {\includegraphics[width=0.49\textwidth]{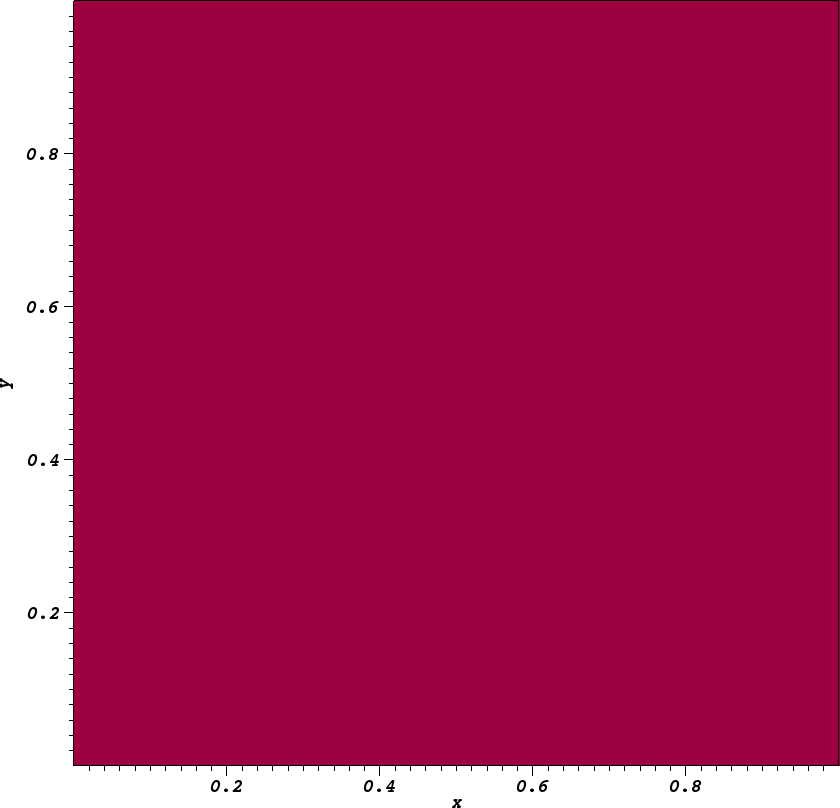}}
  \subfloat[Incompatible flow-transport discretization. \label{fig:transport_b}]
  {\includegraphics[width=0.49\textwidth]{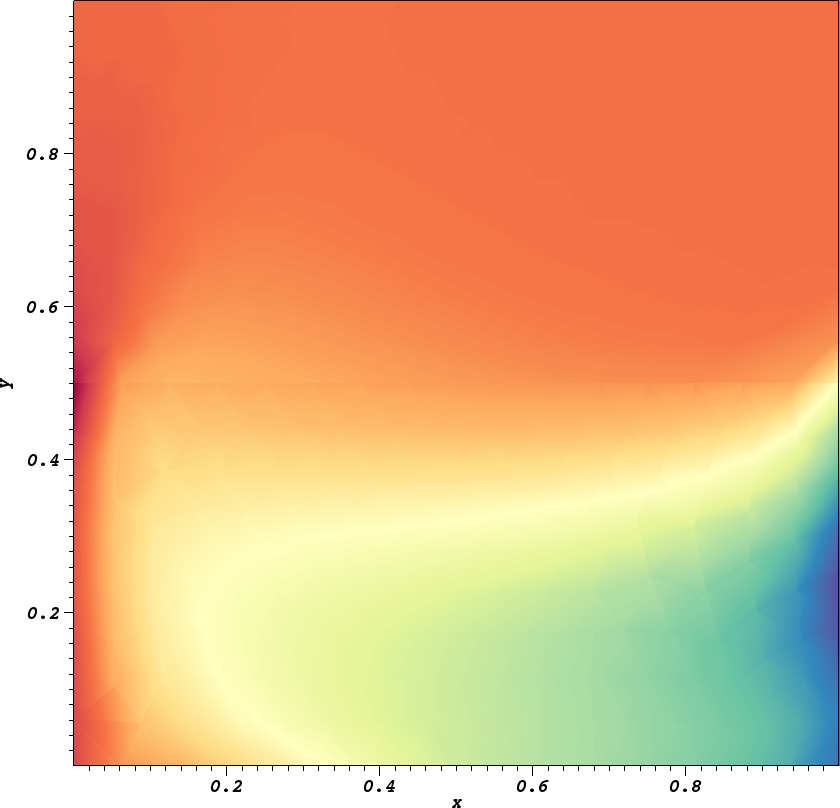}}
  \caption{Test case from~\cref{ss:testing_compatibility_hdg}.  The
    solution to the transport equation at $t=1$. Left: the solution
    $c=1$ is approximated up to machine precision using the compatible
    EDG-HDG discretization. Right: an incompatible flow-transport
    discretization cannot preserve the constant solution $c=1$. Here
    $c_h \in [0.99927, 1.00019]$.}
  \label{fig:transport}
\end{figure}

\subsection{Contaminant transport}
\label{ss:contaminant_transport}
We finally consider a simulation of coupled surface/subsurface flow
and contaminant transport. This test case is similar to that proposed
in~\cite[Example 7.2]{Vassilev:2009}.

For the Stokes--Darcy problem we consider the same setup as in
\cite[Section 6.2]{Cesmelioglu:2020}; we consider the unit square
domain $\Omega = (0,1)^2$ which is divided into a Stokes region
$\Omega^s = (0,1) \times (0.5, 1)$ that represents a lake or a river,
and a Darcy region $\Omega^d = \Omega\backslash\Omega^s$ representing
an aquifer. The domain is divided into \num{14900} simplicial
elements. The mesh is such that $\mathcal{T}^s$ is an exact
triangulation of $\Omega^s$, $\mathcal{T}^d$ is an exact triangulation
of $\Omega^d$, and element boundaries match on the interface
$\Omega^s\cap\Omega^d$. We use a time step of $\Delta t = 10^{-3}$ and
set $\ell = k-1 = 2$.

Let the boundary of the Stokes region be partitioned as
$\Gamma^s = \Gamma^s_1\cup\Gamma_2^s\cup\Gamma_3^s$ where
$\Gamma^s_1 :=\cbr[0]{ x \in\Gamma^s\ :\ x_1=0 }$,
$\Gamma^s_2 :=\cbr[0]{ x \in\Gamma^s\ :\ x_1=1 }$ and
$\Gamma^s_3 :=\cbr[0]{ x \in\Gamma^s\ :\ x_2=1 }$. Similarly, let
$\Gamma^d = \Gamma^d_1\cup\Gamma_2^d$ where
$\Gamma^d_1 :=\cbr[0]{ x \in\Gamma^d\ :\ x_1=0 \ \text{or} \ x_1=1}$ and
$\Gamma^d_2 :=\cbr[0]{ x \in\Gamma^d\ :\ x_2=0 }$. We impose the
following boundary conditions:
\begin{align*}
  u &= (x_2(3/2-x_2)/5, 0) && \text{on}\ \Gamma_1^s, \\
  \del{-2\mu\varepsilon(u) + p\mathbb{I}}n &= 0 && \text{on}\ \Gamma_2^s, \\
  u\cdot n &= 0  && \text{on}\ \Gamma_3^s, \\
  \del{-2\mu\varepsilon(u) + p\mathbb{I}}^t &= 0 && \text{on}\ \Gamma_3^s, \\
  u\cdot n &= 0 && \text{on}\ \Gamma_1^d, \\
  p &= -0.05 && \text{on}\ \Gamma_2^d.
\end{align*}
We set the permeability to
\begin{equation*}
  \kappa(x) = 700 (1 + 0.5 (\sin(10\pi x_1)\cos(20\pi x_2^2)
  + \cos^2(6.4 \pi x_1)\sin(9.2\pi x_2))) + 100.
\end{equation*}
Other parameters in \cref{eq:system,eq:interface} are set as
$\mu = 0.1$, $\alpha=0.5$, $f^s=0$, and $f^d=0$.

For the transport equation~\cref{eq:transport_system} the diffusion
tensor is set to
\begin{equation*}
  D(u_h) =
  \begin{cases}
    \delta I, & \text{ in } \Omega^s,
    \\
    \phi d_m \mathbb{I} + d_l|u_h|\mathbb{T} + d_t|u_h|(\mathbb{I} - \mathbb{T}),
    & \text{ in } \Omega^d,    
  \end{cases}
\end{equation*}
where $u_h$ is the velocity solution to \cref{eq:hdgwf}, $\delta > 0$,
$d_l, d_t\geq 0$ are longitudinal and transverse dispersivities and
$d_m> 0 $ is the molecular diffusivity, $\mathbb{I}$ denotes the
identity matrix, $\mathbb{T}= u_hu_h^T/|u_h|^2$ and $u_h^T$ is the
transpose of the vector $u_h$. This diffusion tensor satisfies
\cref{eq:D_min,eq:upperboundDuabs,eq:D_max} (assuming $d_l\geq d_t$, which is usually the
case) and \cref{eq:DLipschitz} \cite{Douglas:1983, Sun:2002}. In our
numerical example, we choose $\delta = 10^{-6}$, $\phi = 1$ on
$\Omega^s$ and $\phi = 0.4$ on $\Omega^d$ and
$d_m = d_l = d_t = 10^{-5}$. The initial condition for the plume of
contaminant is given by
\begin{equation*}
  c_0(x) =
  \begin{cases}
    0.95 & \text{if}\ \sqrt{ (x_1-0.2)^2 + (x_2-0.7)^2 } < 0.1, \\
    0.05 & \text{otherwise}.
  \end{cases}
\end{equation*}

In \cref{fig:contaminant_transport_conduc_vel} we show the
permeability and the computed velocity field (which are identical to
\cite[Figure
2]{Cesmelioglu:2020}). In \cref{fig:contaminant_transport} we show the
plume of contaminant spreading through the surface water region and
penetrating into the porous medium. As observed also in~\cite[Example
7.2]{Vassilev:2009}, the contaminant plume stays compact while in the
surface water region but spreads out in the groundwater region due to
the heterogeneity of the porous media.
\begin{figure}
  \centering
  \subfloat[The permeability. \label{fig:tc_contaminant_hc}]{\includegraphics[width=0.48\textwidth]{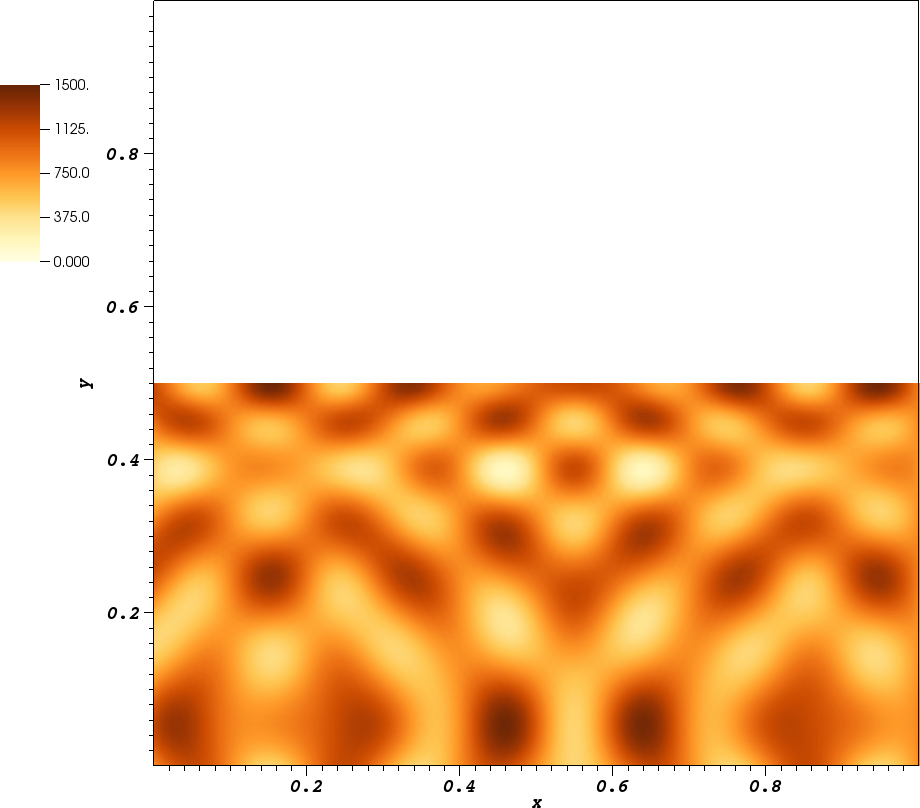}}
  \quad
  \subfloat[The velocity field. \label{fig:tc_contaminant_vel}]{\includegraphics[width=0.48\textwidth]{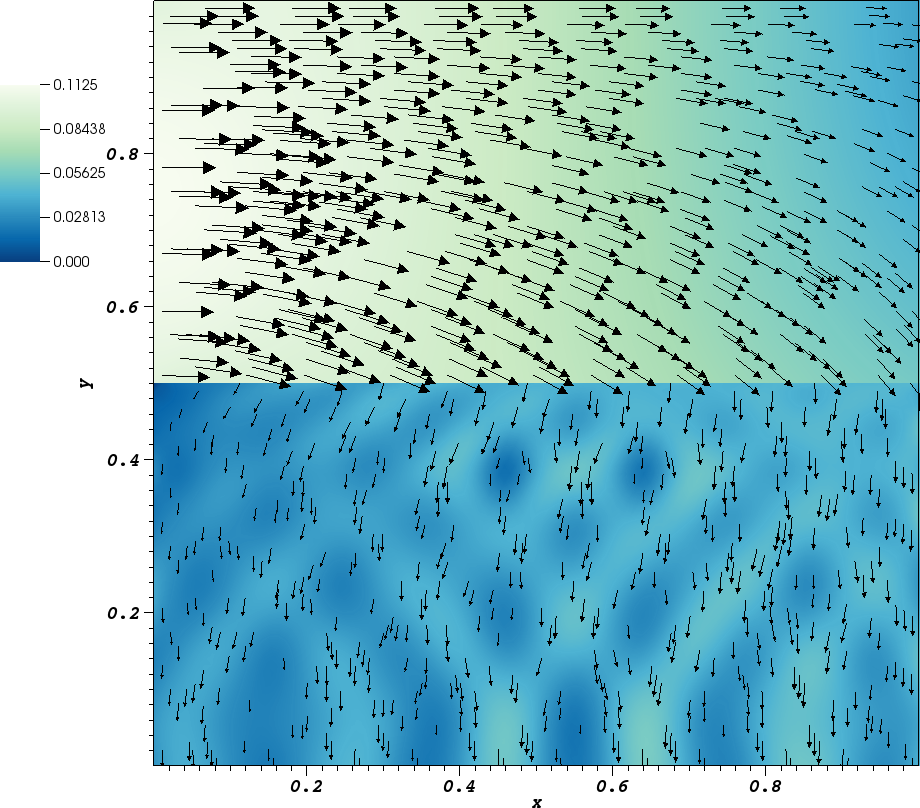}}
  \caption{The permeability and computed velocity field for the test
    case described in \cref{ss:contaminant_transport}.}
  \label{fig:contaminant_transport_conduc_vel}
\end{figure}
\begin{figure}
  \centering \subfloat[The plume at
  $t=0$. \label{fig:tc_contaminant_t0}]{\includegraphics[width=0.48\textwidth]{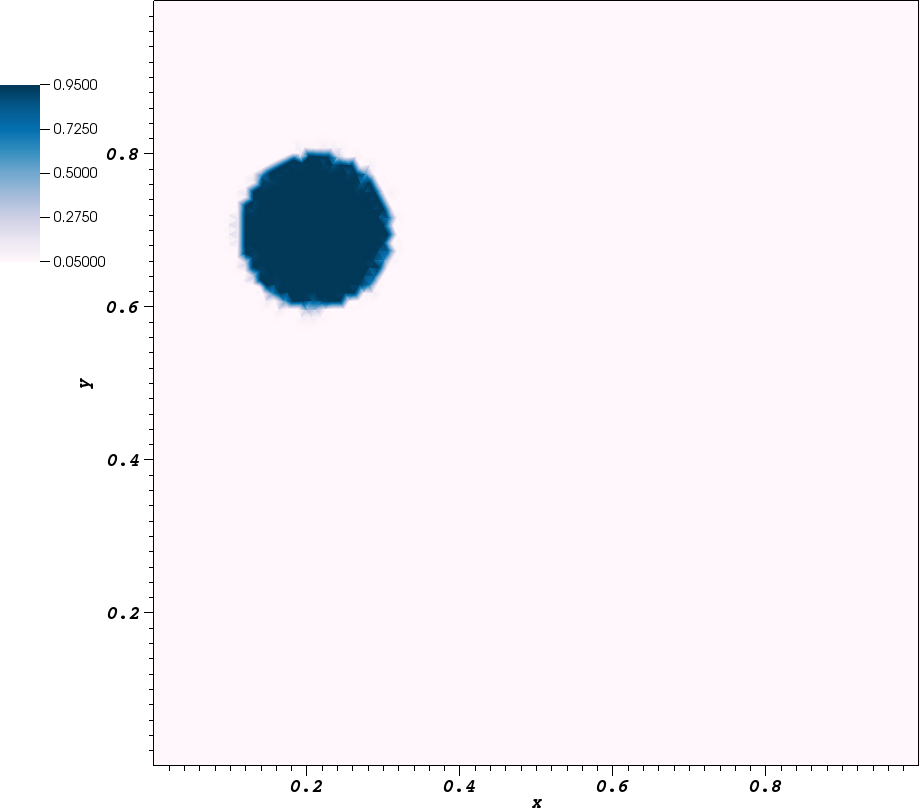}}
  \quad
  \subfloat[The plume at $t=3.3$. \label{fig:tc_contaminant_t2}]{\includegraphics[width=0.48\textwidth]{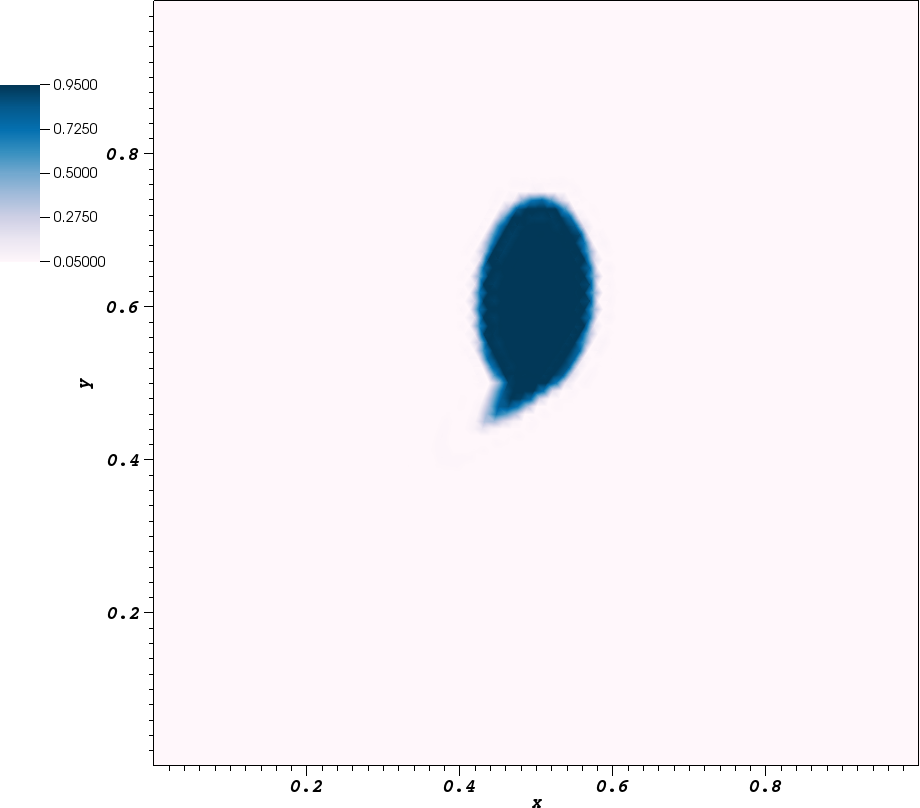}} \\
  \subfloat[The plume at
  $t=6.6$. \label{fig:tc_contaminant_t4}]{\includegraphics[width=0.48\textwidth]{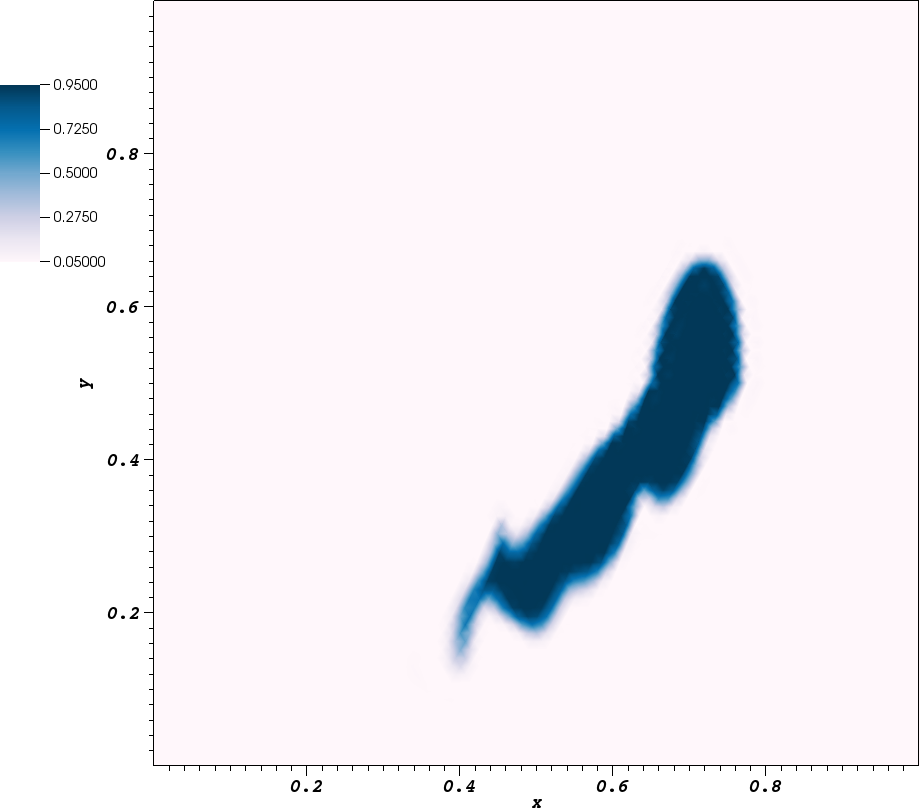}}
  \quad \subfloat[The plume at
  $t=10$. \label{fig:tc_contaminant_t6}]{\includegraphics[width=0.48\textwidth]{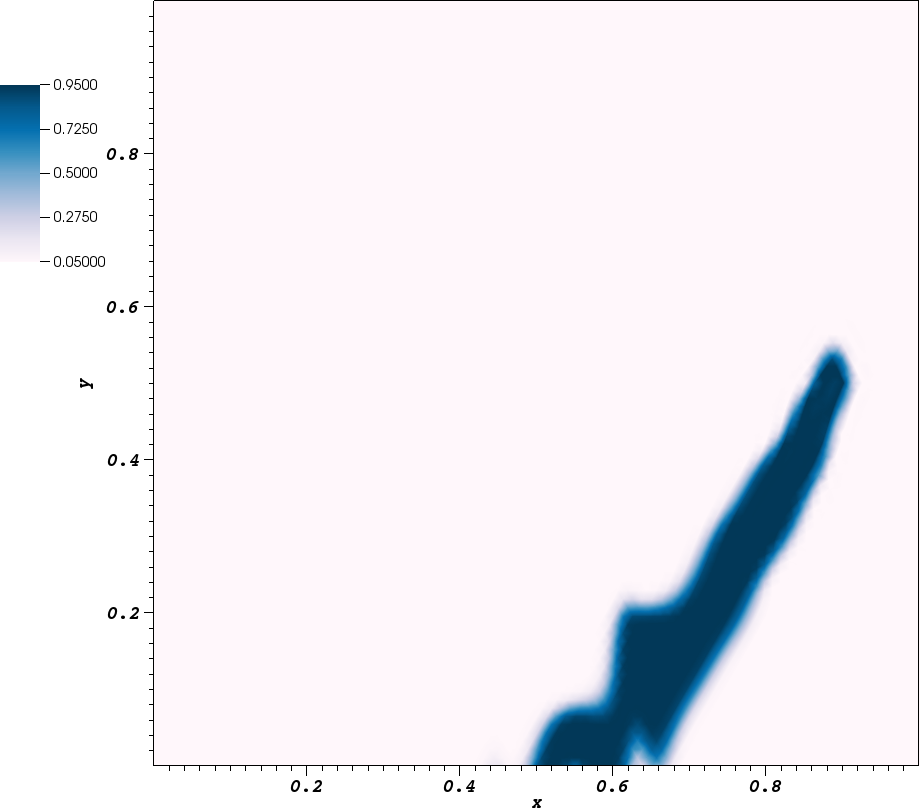}}
  \caption{The plume of contaminant spreading through the surface
    water region and penetrating into the porous medium with snapshots
    at different instances in time. The test case is described
    in~\cref{ss:contaminant_transport}.}
  \label{fig:contaminant_transport}
\end{figure}

\section{Conclusions}
\label{sec:conclusions}

We have analyzed a compatible embedded-hybridized discontinuous
Galerkin discretization for the one-way coupling between Stokes--Darcy
flow and transport and proved existence and uniqueness and optimal
convergence rates for the discrete transport problem. These results
complement our previous work in which we proved optimal and
pressure-robust error estimates for the EDG-HDG discretization of the
Stokes--Darcy system. We verified our theory by numerical examples. We
furthermore demonstrated that an incompatible discretization of the
coupled Stokes--Darcy and transport problem can result in small
oscillations in the solution to the transport equation. This shows the
importance of compatible discretizations for coupled flow and
transport problems.

\section*{Acknowledgments}
SR gratefully acknowledges support from the Natural Sciences and
Engineering Research Council of Canada through the Discovery Grant
program (RGPIN-05606-2015).

\bibliographystyle{abbrvnat}
\bibliography{references}
\end{document}